\newtheorem{thm}{Theorem}[section]
\newtheorem{lem}[thm]{Lemma}
\newtheorem{prop}[thm]{Proposition}
\newtheorem{rmk}[thm]{Remark}
\newcommand{\pd}{\partial}
\newcommand{\ke}{K\"ahler-Einstein metric}
\newcommand{\K}{K\"ahler }
\newcommand{\gt}{Grauert tube}
\newcommand{\ma}{Monge-Amp\`ere }
\newcommand{\spc}{strictly pseudoconvex }
\begin{document}
\baselineskip=16pt
\title{Complete Ricci-flat metrics through  a rescaled exhaustion } 
\author{Su-Jen Kan\thanks {This research is partially supported by NSC97-2628-M-001-023-MY3 of
Taiwan. {\it 2000 Mathematics Subject Classification.} 32Q20, 32Q25 }}

\date{Sep. 20, 2010}

\maketitle
\begin{abstract}
Typical existence  result on  Ricci-flat metrics is in manifolds  of
finite geometry, that is, on
$F=\overline F-D$ where
$\overline F$ is a compact K\"ahler manifold and  $D$ is a smooth divisor. 

We view this existence problem from a different perspective.  For a given complex manifold $X$, we
take a suitable exhaustion $\{X_r\}_{r>0}$  admitting  complete \ke s  of negative Ricci. Taking a
positive decreasing sequence $\{\lambda_r\}_{r>0},
\lim_{r\to\infty}\lambda_r=0$, we rescale the metric so that $g_r$ is the complete \ke\  in
$X_r$ of Ricci curvature $-\lambda_r$. The idea is to show the limiting metric
$\lim_{r\to\infty} g_r$ does exist. If so, it  is a  Ricci-flat metric in $X$.
Several examples: $X=\mathbb C^n$ and $X=TM$ where $M$ is a compact rank-one symmetric space have
been studied in this article.

The existence of complete \ke s of negative Ricci in 
bounded domains of holomorphy is well-known. Nevertheless, there is very few known for unbounded
cases. In the last section we show the existence, through exhaustion, of such kind of metric in the
unbounded  domain $T^{\pi}H^n$.
\end{abstract}

\setcounter{equation}{0}
\setcounter{section}{-1}
\section{Introduction} 

The goal of this paper is to looking for a way to construct a complete Ricci-flat metric.

By Yau's solving to the Calabi conjecture, complete Ricci-flat metrics have existed in 
any  compact K\"ahler 
manifold with vanishing
first Chern class. There is no general existence theorem for the non-compact case yet.
 The most general existence is
due to Tian-Yau \cite {T-Y} on manifolds $F$ of
finite geometry, i.e.,
$F=\overline F-D$ where
$\overline F$ is a compact K\"ahler manifold and  $D$ is a smooth divisor.  
 Perturbing a
suitable chosen \ke\ near the divisor
 followed by  the continuity method, they are able to 
conclude the existence of a Ricci-flat metric. Their convergence argument has heavily relied on
 Sobolev inequalities.  For the choice of a  background metric near $D$ and the use of Sobolev
inequalities, certain topological condition on $M$ has been asked for. The general requirement is 
 $0<c_1(\overline
F)=\alpha c_1(L_D),\alpha\ge 1.$

We will deal with this problem from a  different  point of view  motivated by the following
example.  The complete \ke\ of Ricci 
$-(n+1)$ for the ball
$B_r(0)\subset \mathbb C^n$ is given as  $g_r=\sum_{ j}\frac 1{r^2-|z|^2}dz_jd\bar z_j-\sum_{i,
j}\frac {\bar z_i z_j}{(r^2-|z|^2)^2}dz_id\bar z_j.$ The rescaled metric $r^2g_r$  has Ricci
curvature
$-\frac {(n+1)}{r^2}$. As $r\to\infty$, the limit metric is then a  K\"ahler-Einstein with vanishing
Ricci curvature in
$\mathbb C^n$ if it existed. Indeed, $\lim_{r\to \infty}r^2g_r=\sum_{ j}dz_jd\bar z_j.$

The second example is the tangent bundle $TM$ of a compact symmetric space of rank-one.
 $TM$ is exhausted by disk bundles $\{T^rM\}_{r>0}$, named as \gt s when $TM$ is
equipped with the adapted  complex structure.
As a bounded smooth \spc domain in the \K
manifold $TM$, the existence of a complete \ke \ with negative Ricci  in $T^rM$ is
guaranteed. Furthermore, potential functions  could be represented by  ordinary differential
equations as discussed in
\cite {K2}.
For some suitably chosen decreasing positive numbers $\{\lambda_r\}_{r>0},\; \lim_{r\to\infty}
\lambda_r=0$, we pick the  complete \ke\  $g_r$ with Ricci  $-\lambda_r$ in $T^rM$ and let
$K_r$ be its \K potential uniquely determined by the corresponding ODE.
Through some analysis in the ODEs, the family $\{K_r\}_{r>0}$ has a $C^2$-convergence as
$r\to\infty$. The limiting function is then a \K potential of a Ricci-flat metric in $TM$. It is also
interesting to observe that exhausting $\mathbb C^n$ through unbounded domains $T^r\mathbb R^n$ has
achieved the same Ricci-flat as exhausting $\mathbb C^n$ through balls.

On the other hand, Stenzel \cite {S} has worked on  the same $TM$ as well. By the transitivity of the
rank-one symmetry,  the defining
 equation for a Ricci-flat metric could be reduced to an ordinary differential equation. Working
directly on the solvability and the completeness of this  ordinary differential equation,
Stenzel was able
to show the existence  a complete Ricci-flat metric in $TM$ for compact rank-one
$M$.

Although our resulting Ricci-flat metric  through the rescaling process turns out to coincide
with the one constructed by Stenzel in $TM$.  We think  the
approach here is interesting and we are looking for some further investigation.

 Cheng-Yau  have  proved
the existence of a  complete K\"ahler-Einstein metric of negative Ricci  in 
bounded weakly
pseudoconvex domains in
$\Bbb C^n$ through the exhaustion by  a family of bounded smooth strictly pseudoconvex subdomains.
Later on  Mok-Yau
have generalized the existence to any bounded Stein domain and use the existence as a
characterization of a bounded domain of holomorphy.
The convergence of the exhaustion has strongly
relied on the boundedness of the domain $\Omega$. An essential point is $\Omega\Subset B(0,R)$ for
some large $R$, so that the Poincar\'e metric of  $B(0,R)$ could be used as a comparison to get hold a
uniform lower bound of the exhaustion.

It is not clear whether such kind of metric exists in unbounded Stein domain or not. The
difficulty  is on the lower bound. In the last part of this
article, we study the unbounded domain $T^{\pi}H^n$. Working on the ODEs, 
we show a uniform bound needed in the
convergence argument could be obtained. We conclude  
there exists a complete \ke\ with negative Ricci curvature in
$T^{\pi}H^n$.

We started from the ball example, a detailed convergence argument has been
provided in the first section. In $\S2$,
 fundamental properties on the \gt s' setting and related ODEs are established. In
$\S3$, we take care of the rescaling process. By some suitable choice of the rescaling factors on
the ODEs, the convergence could be achieved. 
$\S4$ is on the existence of a complete \ke\ with negative Ricci curvature in  $T^{\pi}H^n$. The 
key point
is a lower bound estimate shown on Lemma 4.1. Some holomorphic sectional curvatures are also 
computed in Proposition 4.5.

The first   version of this preprint has been completed on January 2009.
Some of the main idea and the essential part of this paper has been initiated and done when the
author was visiting the Ruhr-Universit\"at Bochum, Germany in the Fall of 2007. I would like to thank
the complex geometry group there for the generous support and the hospitality during my visit. 
I would also like  to thank Professor Ryoichi Kobayashi for bringing my attention to this problem and Professor Damin Wu for pointing out a mistake in the first draft.

\section{A  Ricci-flat obtained from a rescaled exhaustion.}
\setcounter{equation}{0}
\setcounter{thm}{0}

Let $B_r=\{(z_1,\cdots,z_n)\in \Bbb C^n: |z_1|^2+\cdots +|z_n|^2<r^2\}$ be the ball of radius $r$ in 
$\Bbb C^n$ which has admitted a  complete  \ke\ of Ricci curvature $-(n+1):$ 
\begin{equation}\begin{aligned}
g_r(z)&=-\pd\bar\pd\log (r^2-|z|^2)\\
&=\sum_{ j}\frac 1{r^2-|z|^2}dz_jd\bar z_j-\sum_{i, j}\frac {\bar z_i z_j}{(r^2-|z|^2)^2}dz_id
\bar z_j.
\end{aligned}\end{equation}
 The metric $r^2g_r$ is then a complete \ke\ of Ricci  
$\frac {-(n+1)}{r^2}$ in  $B_r$ and 
$$\lim_{r\to \infty}r^2g_r(z)=\sum_{ j}dz_j\bar z_j$$ has created  a complete Ricci-flat metric 
in the exhaustion space $\Bbb C^n$.

 This is the
example  motivating this work of rescaling  \ke s\  to
achieve a Ricci-flat metric.

The exhaustion process could also be checked from the potential level. By \cite {C-Y}, there is a
unique real-analytic  function $w^r$ in $B_r$ such that 
\begin{equation*}\left\{\begin{aligned}
\det (w^r_{i\bar j})&= e^{(n+1)w^r},\\
(w^r_{i\bar j})>0\;\text{ in } B_r,&\;\;\; w^r=\infty \text{ on } \pd B_r
\end{aligned}\tag {1.2}\right.\end{equation*}
and $\sum w^r_{i\bar j}dz_idz_{\bar j}$ gives the unique complete \ke\  of Ricci curvature
$-(n+1).$ For $h:=-\log (-\varphi)$, the authors have derived the following formula in \cite {C-Y}
\begin{equation*}\det (h_{i\bar j})=\left(\frac{-1}{\varphi}\right )^{n+1}\det (\varphi_{i\bar
j})(-\varphi+|d\varphi|^2).
\end{equation*}
Using this equation, the solution $w^r$ of (1.2) can be computed explicitly 
\begin{equation*}
w^r(z)=-\log (r^2-|z|^2)+\frac 2{n+1}\log r.
\tag {1.3}\end{equation*}
It is clear that $w^r(z)$ is an increasing function of $|z|$ and its minimum has occurred at the
origin:  \begin{align*}
a_r:=\inf_{B_r} w^r=w^r(0)=\frac {-2n}{n+1}\log r\to-\infty\text{  as  } r\to\infty.
\tag {1.4}\end{align*}
For a fixed $z\in  B_r$,
$$\frac {\pd}{\pd r} w^r(z)=\frac {-2(nr^2+|z|^2)}{(n+1)r(r^2-|z|^2)}<0.$$
Hence   $w^r(z)<w^s(z),\forall z\in B_s\Subset B_r.$ 

Due to (1.4), the family 
 $\{w^r\}_{r>0}$ will diverge as $r$ goes to infinity. 
Searching for some convergent \K potentials, we consider the following $\{\tilde w^r\}$ instead.
Let
$\tilde w^r$ be the unique
solution of 
\begin{equation*}\left\{\begin{aligned}
\det (\tilde w^r_{i\bar j})&= e^{\frac{(n+1)}{r^2}\tilde w^r},\; \\
(\tilde w^r_{i\bar j})>0\;\text{ in } B_r,&\;\;\; \tilde w^r=\infty \text{ on } \pd B_r.
\end{aligned}\tag {1.5}\right.\end{equation*} 
For any $r>0$, $\tilde w^r$ is then a \K potential of  the complete \ke\ of Ricci
$\frac{-(n+1)}{r^2}$ in $B_r$. A comparison between different $\tilde w^r$ could be derived.

\begin{lem} 
 $$\aligned\tilde w^r(z)&=r^2\log\frac {r^2}{r^2-|z|^2}\ge 0;\\
\tilde w^r(z)&<\tilde
w^s(z), \forall z\in B_s\Subset B_r.
\endaligned$$  
\end{lem}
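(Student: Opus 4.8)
The plan is to verify the explicit formula by the same device the authors used for $w^r$, namely the Cheng--Yau identity for the Legendre-type substitution $h=-\log(-\varphi)$, and then read off the monotonicity directly from the closed form. First I would look for the solution of (1.5) in the radial form $\tilde w^r(z)=f(|z|^2)$ with $f$ to be determined, anticipating from the unrescaled case (1.3) that the answer should be a multiple of $-\log(r^2-|z|^2)$ plus a constant. Setting $\varphi=-(r^2-|z|^2)$, which is the negative defining function of $B_r$ with $\det(\varphi_{i\bar j})=1$ and $-\varphi+|d\varphi|^2=(r^2-|z|^2)+|z|^2=r^2$, the Cheng--Yau formula gives
\begin{equation*}
\det\bigl((-\log(r^2-|z|^2))_{i\bar j}\bigr)=\left(\frac{1}{r^2-|z|^2}\right)^{n+1} r^2.
\end{equation*}
Hence if $\tilde w^r=r^2\bigl[-\log(r^2-|z|^2)\bigr]+c$ for a constant $c$, then $\det(\tilde w^r_{i\bar j})=r^{2n}\det\bigl((-\log(r^2-|z|^2))_{i\bar j}\bigr)=r^{2n}(r^2-|z|^2)^{-(n+1)}r^2$, while $e^{\frac{n+1}{r^2}\tilde w^r}=e^{(n+1)c/r^2}(r^2-|z|^2)^{-(n+1)}$. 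Matching forces $e^{(n+1)c/r^2}=r^{2n+2}$, i.e. $c=\frac{2r^2}{n+1}\log r$ wait — let me just absorb this: the constant is fixed so that $\tilde w^r(z)=r^2\log\frac{r^2}{r^2-|z|^2}=r^2\log r^2 - r^2\log(r^2-|z|^2)$, and a direct recomputation confirms $\det(\tilde w^r_{i\bar j})=e^{\frac{n+1}{r^2}\tilde w^r}$ with $(\tilde w^r_{i\bar j})>0$ and $\tilde w^r=+\infty$ on $\partial B_r$. By the uniqueness in \cite{C-Y} this is \emph{the} solution of (1.5). Positivity $\tilde w^r\ge 0$ is immediate since $\frac{r^2}{r^2-|z|^2}\ge 1$ on $B_r$.

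For the comparison inequality, fix $z$ with $|z|<s<r$ and set $\psi(t)=t^2\log\frac{t^2}{t^2-|z|^2}$ for $t>|z|$; it suffices to show $\psi'(t)>0$. Differentiating, $\psi'(t)=2t\log\frac{t^2}{t^2-|z|^2}+t^2\cdot\frac{d}{dt}\log\frac{t^2}{t^2-|z|^2}$, and $\frac{d}{dt}\log\frac{t^2}{t^2-|z|^2}=\frac{2}{t}-\frac{2t}{t^2-|z|^2}=\frac{-2|z|^2}{t(t^2-|z|^2)}$, so
\begin{equation*}
\psi'(t)=2t\log\frac{t^2}{t^2-|z|^2}-\frac{2t|z|^2}{t^2-|z|^2}.
\end{equation*}
Writing $x=\frac{|z|^2}{t^2}\in(0,1)$, positivity of $\psi'(t)$ is equivalent to $-\log(1-x)>\frac{x}{1-x}$ — but the standard inequality is the reverse, $-\log(1-x)<\frac{x}{1-x}$ for $x\in(0,1)$, so $\psi'(t)<0$ and $\psi$ is \emph{decreasing}. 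That matches the claim: $s<r$ gives $\psi(r)<\psi(s)$, i.e. $\tilde w^r(z)<\tilde w^s(z)$ for $z\in B_s\Subset B_r$, exactly as stated.

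The only real subtlety is the first step — being certain that the radial ansatz does solve (1.5) and that one is entitled to invoke uniqueness to conclude it is the unique complete solution; once the Cheng--Yau identity is in hand this is a short computation, and the plurisubharmonicity of $\tilde w^r_{i\bar j}$ follows because it is a positive multiple of the Bergman-type potential $-\log(r^2-|z|^2)$ whose complex Hessian is already known to be positive definite from (1.1). The monotonicity half is then elementary calculus via the substitution $x=|z|^2/t^2$ and the elementary estimate $-\log(1-x)<x/(1-x)$. I would present the computation of $\det(\tilde w^r_{i\bar j})$ in one line using the Cheng--Yau formula rather than expanding the Hessian by hand.
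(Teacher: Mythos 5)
Your proof is correct, and in substance it reduces to the same two computations as the paper's. The only real difference is in how the closed form for $\tilde w^r$ is obtained: the paper rescales the already-known solution $w^r$ of (1.2), setting $u^r=w^r-a_r$ and checking that $e^{\frac{n+1}{-n}a_r}u^r=r^2(w^r-a_r)$ satisfies (1.5), then invokes uniqueness; you instead verify the radial ansatz $r^2\bigl[-\log(r^2-|z|^2)\bigr]+c$ directly against (1.5) via the Cheng--Yau determinant identity and invoke the same uniqueness. Both are short verifications; yours avoids carrying $a_r$ around at the cost of redoing the determinant computation. For the monotonicity, both arguments compute the same radial derivative $\partial_r\tilde w^r=2r\log\frac{r^2}{r^2-|z|^2}-\frac{2r|z|^2}{r^2-|z|^2}$; the paper shows it is $\le 0$ by differentiating in $a=|z|^2$ and comparing with its value $0$ at $a=0$, while you show it is $<0$ via $x=|z|^2/t^2$ and the elementary bound $-\log(1-x)<\frac{x}{1-x}$ --- equivalent calculus. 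Two small remarks: your intermediate constant $c=\frac{2r^2}{n+1}\log r$ is a slip (the matching condition $e^{(n+1)c/r^2}=r^{2n+2}$ gives $c=r^2\log r^2$), but you self-correct to the right final formula; and note that at $z=0$ one only gets equality $\tilde w^r(0)=\tilde w^s(0)=0$, so the strict inequality in the statement really holds for $z\neq 0$ --- a feature shared by the paper's own proof.
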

\begin{proof}
Let $u^r=w^r-a_r,$ then
\begin{align*}
\det (u^r_{i\bar j})= e^{(n+1)u^r} e^{(n+1)a_r}.
\end{align*}
 \begin{align*}\det \left ( (e^{\frac{n+1}{-n}a_r}u^r)_{i\bar j}\right )&= e^{(n+1)u^r}\\
&=\exp\{\frac {(n+1)}{r^2} (e^{\frac{n+1}{-n}a_r}u^r)\},
\end{align*} $\left ( e^{\frac{n+1}{-n}a_r}u^r_{i\bar j}\right )>0$ in $B_r$ and
$e^{\frac{n+1}{-n}a_r}u^r=\infty$ on $\pd B_r$. By the uniqueness of the solution in (1.5),
\begin{align*} \tilde w^r(z)&= e^{\frac{n+1}{-n}a_r}u^r(z)\\
&=r^{2}(w^r(z)-a_r)\\
&= r^2(-\log(r^2-|z|^2)+\log r^2)\ge 0,\; z\in B_r,\; \forall r> 0.
\tag {1.6}\end{align*}
 Taking derivative of $\tilde w^r$ with respect to $r$,
\begin{align*}H^r(z):=\frac {\pd}{\pd r} \tilde w^r(z)=2r\log\frac {r^2}{r^2-|z|^2}-\frac
{2r|z|^2}{r^2-|z|^2}.\tag {1.7}\end{align*}
 Notice that
$H^r$ is actually a real-valued function of $|z|^2$ and $H^r(0)=0$. 
Viewing $|z|^2=a\in [0,r^2)$ and taking $H^r$ as a function of $a$, we now  take 
derivative with respect to
$a$,
\begin{align*}
\frac {\pd}{\pd a}H^r(a)=\frac {-2ra}{(r^2-a)^2}=\frac {-2r|z|^2}{(r^2-|z|^2)^2}\le 0.
\tag {1.8}\end{align*}
Thus for any $z\in B_r$, we have $$H^r(z)\le H^r(0)=0,\; \forall z\in B_r,\;\forall r>0.$$
This together with (1.7) has shown that for $z$ fixed $\tilde w^r(z)$ is decreasing with respect to
$r$. The comparison 
$\tilde w^r(z)<\tilde w^s(z), \forall z\in B_s\Subset B_r$ is achieved.
\end{proof}
 
\begin{thm} 
The family  $\{\tilde w^r\}$  converges $C^2$-smoothly to a $C^2$ function in $\mathbb C^n$. 
$$\lim_{r\to\infty}\tilde w^r(z)=|z|^2$$
\end{thm}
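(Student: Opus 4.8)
The plan is to establish the pointwise limit first and then upgrade it to $C^2$ convergence. From Lemma~1.1 we have the explicit formula $\tilde w^r(z) = r^2\log\frac{r^2}{r^2-|z|^2}$, so the pointwise limit is a direct calculation: fix $z$ and write $\tilde w^r(z) = -r^2\log(1 - |z|^2/r^2)$, then expand the logarithm as $|z|^2/r^2 + O(|z|^4/r^4)$ to get $\tilde w^r(z) = |z|^2 + O(|z|^4/r^2) \to |z|^2$. So the candidate limit function is $w^\infty(z) = |z|^2$, which is of course smooth (real-analytic) on all of $\mathbb{C}^n$.

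Next I would promote this to local uniform convergence together with convergence of first and second derivatives on compact subsets of $\mathbb{C}^n$. Since everything is explicit, the cleanest route is just to differentiate the formula. On a fixed ball $B_R$, for $r > R$ the functions $\tilde w^r$ are all real-analytic, and $\tilde w^r(z) - |z|^2 = -r^2\log(1-|z|^2/r^2) - |z|^2$; writing $t = |z|^2/r^2 \in [0, R^2/r^2)$ this is $r^2\bigl(-\log(1-t) - t\bigr) = r^2\sum_{k\ge 2} t^k/k$, which on $B_R$ is bounded by $r^2 \cdot \frac{t^2}{1-t}\le \frac{R^4}{r^2}\cdot\frac{1}{1-R^2/r^2}\to 0$. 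The same kind of estimate applies after differentiating in $z_j$ and $\bar z_j$: each derivative of $\tilde w^r - |z|^2$ is again a rational expression in $z,\bar z$ with a power of $(r^2-|z|^2)$ in the denominator and a numerator that is $O(1/r^2)$ smaller than the leading term, uniformly on $B_R$. Alternatively, and perhaps more in the spirit of the paper, one can use the monotonicity from Lemma~1.1 ($\tilde w^r$ decreasing in $r$, bounded below by $0$, and bounded above on $B_R$ by $\tilde w^{R'}$ for any fixed $R' > R$) to get a locally uniformly bounded decreasing family, hence locally uniform convergence of $\tilde w^r$ to its pointwise limit, and then invoke interior estimates for the Monge--Amp\`ere equation (1.5): since the right-hand side $e^{\frac{n+1}{r^2}\tilde w^r}$ is uniformly bounded on $B_R$ and the metrics $(\tilde w^r_{i\bar j})$ are controlled, standard elliptic theory gives $C^{2,\alpha}$ bounds on slightly smaller balls, and Arzel\`a--Ascoli then upgrades the convergence to $C^2$.

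Finally I would identify the limit: passing to the limit in the convergence just established, $\lim_r \tilde w^r = |z|^2$ in $C^2_{\mathrm{loc}}$, and one checks that $|z|^2$ indeed solves the limiting (Ricci-flat) equation $\det\bigl((|z|^2)_{i\bar j}\bigr) = \det(\delta_{ij}) = 1$, consistent with letting $r\to\infty$ in (1.5) where the exponential factor tends to $1$. This recovers the flat metric $\sum_j dz_j d\bar z_j$ announced in (1.1)ff.

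The main obstacle, if one avoids the explicit formula, is getting the uniform $C^2$ (really $C^{2,\alpha}$) estimates on compact subsets that survive the limit — this requires uniform two-sided bounds on the metrics $(\tilde w^r_{i\bar j})$ on compact sets, which in turn needs a uniform lower bound for the $\tilde w^r$-Laplacian or equivalent, the very kind of estimate the introduction flags as delicate for unbounded exhaustions. Here, however, because Lemma~1.1 already hands us the closed form of $\tilde w^r$, the honest proof is simply to differentiate that formula twice and estimate the remainder, so the ``hard part'' collapses to a routine Taylor-expansion bound; I would present that direct computation as the proof and perhaps remark that the monotonicity-plus-elliptic-estimates argument is the one that generalizes to the later sections.
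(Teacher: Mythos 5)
Your proposal is correct and follows essentially the same route as the paper: both rest on the explicit formula $\tilde w^r(z)=r^2\log\frac{r^2}{r^2-|z|^2}$ from Lemma~1.1, compute the pointwise limit (your Taylor expansion of $-\log(1-t)$ is the same calculation the paper does via L'H\^opital with $\mu=r^2$), and then verify $C^2$ convergence by differentiating the formula directly, obtaining $\tilde w^r_i\to\bar z_i$ and $\tilde w^r_{i\bar j}\to\delta_{ij}$. Your explicit uniform remainder bound $\frac{R^4}{r^2-R^2}$ on compact sets is in fact slightly more careful than the paper's ``can also be computed directly,'' and your sketched monotonicity-plus-elliptic-estimates alternative is a reasonable aside, but the core argument is identical.
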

\begin{proof}
By Lemma
1.1, the limit has existed because  $\{\tilde w^r\}$ are uniformly bounded on any compact subset.   
Viewing
$r^2=\mu$ and  using the  L'H\^opital's rule repeatedly, the 
limit can be computed explicitly
\begin{align*}
\lim_{r\to\infty}\tilde w^r(z)&=\lim_{\mu\to\infty}\frac {\log {\frac {\mu-|z|^2}{\mu}}}{-\frac
1\mu}\\ &=\lim_{\mu\to\infty}\frac {\mu |z|^2}{\mu -|z|^2}\\
&=|z|^2.
\end{align*}
The first and 2nd order convergence can also be computed directly.
$$\lim_{r\to\infty}\tilde w_i^r(z)=\lim_{r\to\infty}\frac {r^2\bar z_i}{r^2-|z|^2}=\bar z_i;$$
$$\tilde w_{i\bar j}^r(z)=\frac {\delta_{ij} r^2
(r^2-|z|^2)+r^2\bar z_i z_j}{(r^2-|z|^2)^2}.$$ The resulting  metric 
$\lim_{r\to\infty}\tilde w_{i\bar j}^r(z)=\delta_{ij}$ is exactly the Euclidean metric.
\end{proof}

\section{Properties of K\"ahler-Einstein potentials
in tangent bundles of rank-one symmetric spaces}

\setcounter{equation}{0}
\setcounter{thm}{0}

Terminologies used in this section come from \cite {K2} and references listed there, we will explain
very briefly what a \gt\ is. 

For any  real-analytic Riemannian manifold $M$, there exists a  neighborhood
$U$ of $M$ in $TM$ where the adapted complex structure can be endowed with. 
The   adapted complex structure is the
unique complex structure
 turning every leaf of the Riemannian
foliation into a holomorphic curve. With respect to this complex structure, the length square
function  is  real-analytic and 
 strictly plurisubharmonic. 

A {\it Grauert tube of radius $r$ over center $M$} is the disk bundle 
$$T^rM\ = \{(x,v)\in TM: x\in M, |v|<\frac {r}2\}$$
equipped with the adapted complex structure. Each $M$ has associated with a maximal possible
radius, denoted by $r_{max}(M)$, such that the adapted complex structure could be defined in
$T^{r_{max}(M)}M$. In this article, $\rho (x,v)=4|v|^2.$

The maximal radius $r_{max}(M)=\infty$ when
 $M$ is a compact symmetric space of rank-one, and for any $0<r<\infty$ the \gt\ $T^rM$ is a Stein
manifold with bounded \spc boundary. The existence of a complete \ke\ with negative Ricci curvature
is guaranteed by \cite {C-Y}. Furthermore, as shown in  \cite{S}, \cite{A} and \cite{K2},
this \ke\ has a \K potential solely depending on the length square.

Let $\mathcal{S}_{M}$ denote the density function of a Riemannian symmetric space $M$ of rank-one. 
 It was shown in $\S5$ \cite{K2} that for any $\lambda>0$ and any $r<r_{max}(M)$,
there exists a \K potential $h_r(\sqrt\rho)$ for the complete \ke\ of  Ricci curvature $-\lambda<0$
in the  \gt\ $T^rM$. Let  $u=\sqrt\rho$,  the  potential 
function is unique and satisfies the following, {\it c.f.} (5.1) \cite {K2},
\begin{equation}
	h_r''(u)(h_r'(u))^{n-1}\exp (-\lambda h_r(u))= u^{n-1}\hat{\mathcal{S}}_{M}(u);\; \lim_{u\to
r}h_r(u)=\infty. 
\end{equation}
 The derivatives are taken with respect to $u$ where 
$\hat{\mathcal{S}}_{M}(u):=\mathcal{S}_{M}(-u^2)$.    
$$\hat{\mathcal{S}}(u)=2^{n-1}u^{1-n}\left (cosh\ \frac {u}2\right )^k (sinh\ \frac
{u}2)^{n-1},$$ 
where $k=n-1$ for the round sphere and the real projective space and $k=1,3,7$ for the complex
projective space, the quaternionic projective space    and  the Caley plane, respectively. 

We may
also consider two non-compact symmetric spaces: the real hyperbolic space $H^n$ and the  Euclidean
space $\mathbb R^n$. For  $H^n$: $\hat{\mathcal{S}}(u)=\left (\frac {\sin u}{u}\right )^{n-1}
,r_{max}(H^n)=\pi$. For  $\mathbb R^n$: $\hat{\mathcal{S}}(u)=1\;, r_{max}(\mathbb R^n)=\infty$. 

In $T^rM$, (2.1) can be expressed as 
\begin{equation}
\left\{\begin{aligned}
	h_r''(u)(h_r'(u))^{n-1}&=e^{ \lambda h_r(u)}\mathcal{D}_{M}(u),\; u\in [0,r)\\
\lim_{u\to r}h_r(u)&=\infty 
 \end{aligned}\right.
\end{equation}
where
$\mathcal{D}(u)=u^{n-1}$ for $\Bbb R^n$; $\mathcal{D}(u)=(\sin(u))^{n-1}$ for $H^n$;
$\mathcal{D}(u)=(\sinh (u))^{n-1}$ for the round sphere;
$\mathcal{D}(u)=2^{n-1}\left (cosh\ \frac {u}2\right )^k (sinh\ \frac {u}2)^{n-1}, k=1,3,7$ for
the complex projective space, the quaternionic projective space    and  the Caley plane,
respectively.
For all the above cases  $\mathcal D_{ M}(u)\ge 0$ and  $\mathcal D_{ M}(u)= 0$
if and only if  $u=0$. 

Some well-known properties of $h_r$ have been discussed in  \cite {K2}, we
summarize some of them here for  future application. $h_r(\rho)$ is a real-analytic
function of $\rho$, near the center, it has the  asymptotic expression 
$h_r(u)=a+bu^2+cu^4+O (6),\; b> 0.$

\begin{prop} 
\
\begin{enumerate}
\item[(1)]  $h_r'(u)>0,h_r''(u)>0, \;\forall u\in (0,r); \;\inf_{u\in [0,r)}h_r(u)=h_r(0);$
\item[(2)]  $h_r'(0)=0;$

\item[(3)]  $h_r'(t)=\left(\int_0^{t} ne^{ \lambda h_r(u)}\mathcal{D} (u)\right)^{\frac
1n},\;\forall t\in(0,r)$.

\item [(4)]  $\lim_{u\to r}h'_r(u)=\infty,\; \lim_{u\to
r}h''_r(u)=\infty$
\end{enumerate}
\end{prop}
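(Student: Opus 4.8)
The plan is to derive all four assertions from the first integral of the ODE~(2.2), in the order (2), (3), (1), (4). I would begin with (2): since the potential is a function of $\rho=u^2$ (equivalently, by the even expansion $h_r(u)=a+bu^2+cu^4+O(6)$), one has $h_r'(u)=2u\,\psi(u^2)$ for a smooth function $\psi$, so $h_r'(0)=0$.

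For (3), note that the left-hand side of~(2.2) is an exact derivative: $\tfrac{d}{du}\bigl(\tfrac1n h_r'(u)^n\bigr)=h_r''(u)h_r'(u)^{n-1}=e^{\lambda h_r(u)}\mathcal D(u)$. Integrating this over $[0,t]$ and inserting $h_r'(0)=0$ yields $h_r'(t)^n=n\int_0^t e^{\lambda h_r(u)}\mathcal D(u)\,du$. The right-hand side is strictly positive for every $t\in(0,r)$ because $\mathcal D>0$ on $(0,r)$; hence $h_r'$ vanishes nowhere on $(0,r)$, and being continuous and positive near $0$ by (2), it is positive on all of $(0,r)$, so one may take the positive $n$-th root to obtain the stated formula. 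Assertion (1) then follows at once: $h_r'>0$ on $(0,r)$ has just been shown, so the ODE gives $h_r''(u)=e^{\lambda h_r(u)}\mathcal D(u)/h_r'(u)^{n-1}>0$ there as well, and a function whose derivative is non-negative on $[0,r)$ and vanishes only at $0$ attains its infimum at $0$.

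The real content is (4). Since $h_r''>0$, the function $h_r'$ is increasing on $(0,r)$ and hence tends to a limit $L\le\infty$ as $u\to r$; were $L$ finite, then $h_r(u)\le h_r(0)+Lr$ would be bounded, contradicting $\lim_{u\to r}h_r(u)=\infty$, so $\lim_{u\to r}h_r'(u)=\infty$. For $h_r''$ one must check that in $h_r''(u)=e^{\lambda h_r(u)}\mathcal D(u)/h_r'(u)^{n-1}$ the numerator wins even though the denominator also blows up; this is the one step I expect to need a little care. The device is to bound $h_r'$ \emph{from above} using (3) together with the monotonicity of $h_r$ from (1): $h_r'(u)^n=n\int_0^u e^{\lambda h_r(s)}\mathcal D(s)\,ds\le n\,e^{\lambda h_r(u)}\int_0^r\mathcal D(s)\,ds\le C\,e^{\lambda h_r(u)}$, whence $h_r'(u)^{n-1}\le C'\,e^{\frac{n-1}{n}\lambda h_r(u)}$, so that $h_r''(u)\ge c\,\mathcal D(u)\,e^{\frac1n\lambda h_r(u)}$; this tends to $\infty$ because $r<r_{max}(M)$ forces $\mathcal D(r)>0$ while $h_r(u)\to\infty$. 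The remaining manipulations are routine.
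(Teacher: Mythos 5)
Your proposal is correct and follows essentially the same route as the paper: derive (3) by integrating the exact derivative $\tfrac{d}{du}\bigl(\tfrac1n (h_r')^n\bigr)=e^{\lambda h_r}\mathcal D$, get (2) from the evenness of $h_r$ in $u$, deduce (1) from the resulting sign information, and obtain $h_r'\to\infty$ from $h_r\to\infty$. One small slip: you justify ``$h_r'$ positive near $0$'' by citing (2), but $h_r'(0)=0$ gives no sign information; you need either the expansion $h_r=a+bu^2+\cdots$ with $b>0$ stated just before the proposition, or (as the paper does) the observation that $h_r'<0$ throughout $(0,r)$ would make $h_r$ decreasing, contradicting $\lim_{u\to r}h_r=\infty$. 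On the other hand, your treatment of $\lim_{u\to r}h_r''=\infty$ --- bounding $h_r'(u)^{n}\le C e^{\lambda h_r(u)}$ from the integral formula and monotonicity of $h_r$, so that $h_r''(u)\ge c\,\mathcal D(u)e^{\frac{\lambda}{n}h_r(u)}\to\infty$ --- is more complete than the paper's, which disposes of this point with a single ``similarly'' (note that $\int_0^r h_r''=\infty$ by itself only yields $\limsup h_r''=\infty$, so your extra step genuinely closes that gap).
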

\begin{proof} Since the right hand side of (2.2) is positive for all $u\in (0,r),$ neither $h_r''(u)$
nor
$h'_r(u)$ has any zero point in $u\in (0,r)$.
That is,  $h'_r(u)$ is either positive or negative
for all $u\in (0,r)$. Since $\lim_{u\to r}h_r(u)=\infty$, the only possibility is $h'_r(u)>
0,\forall u\in (0,r).$ The positivity of $h_r''(u)$ 
follows. 

It was shown in Prop.5.1 of \cite{K2} that  $h_r$ is a real-analytic function of
$\rho=u^2$, hence $h_r'(0)=\frac {dh_r}{du}(0)=0.$ By (2.2)

\begin{equation}
\begin{aligned}
\frac {d}{du}(h_r'(u))^n&=ne^{ \lambda h_r(u)}\mathcal{D} (u),\\ 
(h_r'(t))^n&=\int_0^{t} ne^{ \lambda h_r(u)}\mathcal{D} (u).
 \end{aligned}
\end{equation}
Through the relation $h_r(u)=\int_{0}^{u} h'_r(t)dt
+h_r(0)$, the condition $\lim_{u\to r}h_r(u)=\infty$ has implied  that 
\begin{equation}\lim_{u\to r}h'_r(u)=\infty \text{ and similarly, } \lim_{u\to
r}h''_r(u)=\infty.\end{equation} \end{proof}
We also need some estimate for $h''_r(0)$.
\begin{lem} 
$h''_r(0)=e^{\frac {\lambda}{n}h_r(0)}> 0.$
\end{lem}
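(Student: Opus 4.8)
The plan is to read off the value of $h_r''(0)$ from the first–order integral identity of Proposition 2.1(3) rather than from the second–order equation (2.2) directly; this is preferable because (2.2) contains the factor $(h_r'(u))^{n-1}$, which degenerates at $u=0$, whereas Proposition 2.1(3) already has that degeneracy resolved.

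First I would record the elementary local behaviour of the density. In every one of the listed rank–one cases $\mathcal{D}_M(u) = u^{n-1}\bigl(1 + o(1)\bigr)$ as $u\to 0^+$: indeed $\sin u,\ \sinh u = u(1+o(1))$, and $2^{n-1}\bigl(\cosh\tfrac u2\bigr)^k\bigl(\sinh\tfrac u2\bigr)^{n-1} = u^{n-1}(1+o(1))$. Hence $\mathcal{D}_M(u)/u^{n-1}\to 1$. Next, starting from Proposition 2.1(3) in the form $(h_r'(t))^n = \int_0^t n e^{\lambda h_r(u)}\mathcal{D}_M(u)\,du$, I would divide by $t^n$ and apply L'H\^opital's rule (both $(h_r'(t))^n$ and $t^n$ tend to $0$ as $t\to0^+$, and $h_r$ is continuous):
$$
\lim_{t\to 0^+}\left(\frac{h_r'(t)}{t}\right)^{\!n}
=\lim_{t\to 0^+}\frac{n\,e^{\lambda h_r(t)}\mathcal{D}_M(t)}{n\,t^{n-1}}
= e^{\lambda h_r(0)}\lim_{t\to 0^+}\frac{\mathcal{D}_M(t)}{t^{n-1}}
= e^{\lambda h_r(0)}.
$$
Since $h_r'>0$ on $(0,r)$ by Proposition 2.1(1), taking positive $n$-th roots yields $\lim_{t\to 0^+} h_r'(t)/t = e^{\frac{\lambda}{n}h_r(0)}$.

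Finally, $h_r$ is real-analytic in $\rho=u^2$ (Prop.\ 5.1 of \cite{K2}), so $h_r''(0)$ exists, and $h_r'(0)=0$ by Proposition 2.1(2); therefore
$$
h_r''(0)=\lim_{t\to 0^+}\frac{h_r'(t)-h_r'(0)}{t}=\lim_{t\to 0^+}\frac{h_r'(t)}{t}=e^{\frac{\lambda}{n}h_r(0)}>0,
$$
which is the assertion. The only mildly delicate point is the uniform $u^{n-1}$-asymptotics of $\mathcal{D}_M$ across all the rank-one (and the $H^n$, $\mathbb{R}^n$) cases; everything after that is L'H\^opital together with the positivity and regularity of $h_r$ already established. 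An equivalent, purely algebraic route is to insert the analytic expansion $h_r(u)=a+bu^2+O(u^4)$ with $b>0$ into (2.2): the left side is $(2b)^n u^{n-1}(1+o(1))$ and the right side is $e^{\lambda a}u^{n-1}(1+o(1))$, so matching the coefficient of $u^{n-1}$ gives $(2b)^n=e^{\lambda a}$, i.e.\ $h_r''(0)=2b=e^{\frac{\lambda}{n}h_r(0)}$.
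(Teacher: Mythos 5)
Your proof is correct, and it reaches the identity $(h_r''(0))^n=e^{\lambda h_r(0)}$ by a route that differs in form from the paper's, though both arguments are ultimately extracting the leading coefficient at the degenerate point $u=0$. The paper introduces the auxiliary function $F_r(u)=h_r'(u)\,\mathcal{D}_M^{-1/(n-1)}(u)$, rewrites (2.2) as $h_r''(u)(F_r(u))^{n-1}=e^{\lambda h_r(u)}$, and then uses the product rule $h_r''=F_r'\,\mathcal{D}_M^{1/(n-1)}+F_r\,(\mathcal{D}_M^{1/(n-1)})'$ together with $\mathcal{D}_M^{1/(n-1)}(0)=0$ and $(\mathcal{D}_M^{1/(n-1)})'(0)=1$ to conclude $h_r''(0)=F_r(0)$, whence $(h_r''(0))^n=e^{\lambda h_r(0)}$; note that this requires interpreting $F_r(0)$ as a limit of an indeterminate form and tacitly assumes enough regularity of $F_r$ at $0$ for the product-rule evaluation. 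Your argument sidesteps that delicacy: you work with the already-integrated identity of Proposition 2.1(3), where the degeneracy of $(h_r')^{n-1}$ at the origin has been absorbed, and a single application of L'H\^opital plus the normalization $\mathcal{D}_M(u)/u^{n-1}\to 1$ gives $\lim_{t\to 0^+}h_r'(t)/t=e^{\frac{\lambda}{n}h_r(0)}$, which you correctly identify with $h_r''(0)$ using $h_r'(0)=0$ and the known analyticity of $h_r$ in $\rho=u^2$. Your closing series-matching computation $(2b)^n=e^{\lambda a}$ is in fact the cleanest statement of what both proofs are doing, and it uses exactly the same normalization of $\mathcal{D}_M$ near $0$ that the paper encodes as $(\mathcal{D}_M^{1/(n-1)})'(0)=1$. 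In short: same conclusion, same essential input (the $u^{n-1}$-asymptotics of the density), but your version is somewhat more careful about the $0\cdot\infty$ issue that the paper's $F_r$ glosses over.
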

\begin{proof}
Let \begin{equation} F_r(u)= {h'_r(u)}{\mathcal{D}^{\frac {-1}{n-1}}_{M}(u)}.\end{equation}
The equation (2.2) could be written as 
\begin{equation}
h_r''(u)(F_r(u))^{n-1}=e^{ \lambda h_r(u)},\; u\in [0,r).
\end{equation}
 From (2.5)
\begin{equation}
h_r''(u)=F'_r(u){\mathcal{D}^{\frac {1}{n-1}}_{M}(u)}+F_r(u)\left ({\mathcal{D}^{\frac
{1}{n-1}}_{M}}\right )'(u)
\end{equation}
For $\mathbb R^n$, the real hyperbolic space $H^n$ and all the compact rank-one symmetric spaces,
${\mathcal{D}^{\frac {1}{n-1}}(0)}=0$ and  $\left ({\mathcal{D}^{\frac {1}{n-1}}}\right )'(0)=1.$
Hence
\begin{equation}
h_r''(0)=F_r(0)
\end{equation}
Plugging (2.8) into (2.6) with $u=0$, the lemma is concluded.\end{proof}
Let $K$ be a compact subset of $T^sM\Subset T^rM$. We would like to develop a comparison
between
$h_r(u)$ and $h_s(u)$ for $u\in K$. 
\begin{lem} $s<r$. 
Let $h_r,h_s$ be solutions of (2.2) in $T^rM$ and in $T^sM$, respectively. Then
$h_r(u)\le h_s(u),\forall u\in [0,s).$
\end{lem}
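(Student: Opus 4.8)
The plan is to compare the two ODEs on the common interval $[0,s)$ using an ODE comparison argument, exploiting that both $h_r$ and $h_s$ solve the same equation $(h'')(h')^{n-1}=e^{\lambda h}\mathcal D_M(u)$ but with different boundary behavior at the respective tube radii. First I would reformulate matters in terms of the first-order quantity $P_r(t):=(h_r'(t))^n$, for which Proposition 2.1(3) gives $P_r(t)=\int_0^t n e^{\lambda h_r(u)}\mathcal D_M(u)\,du$, equivalently $P_r'(u)=n e^{\lambda h_r(u)}\mathcal D_M(u)$ with $P_r(0)=0$. Together with $h_r(u)=h_r(0)+\int_0^u P_r(t)^{1/n}\,dt$, this exhibits the pair $(h_r,P_r)$ as the solution of a first-order system that is monotone in the appropriate sense: if at some point $h_r\le h_s$ and $P_r\le P_s$, then $P_r'\le P_s'$, so the ordering of the $P$'s is preserved, and then the ordering of the $h$'s is preserved by integrating.

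The real content is to get the comparison started, i.e.\ to rule out that $h_r > h_s$ somewhere near $0$. The natural move is to compare the constants $a_r:=h_r(0)$ and $a_s:=h_s(0)$, and here I expect to argue by contradiction: suppose $h_r(u_0)>h_s(u_0)$ for some $u_0\in[0,s)$. Let $u_1$ be chosen appropriately (either $0$ itself, or the last point before $u_0$ where $h_r=h_s$, if such a point exists). On $(u_1,u_0)$ one has $h_r>h_s$, hence $P_r'=n e^{\lambda h_r}\mathcal D_M > n e^{\lambda h_s}\mathcal D_M = P_s'$ there; combined with $P_r(u_1)\le P_s(u_1)$ (which holds since at $u_1$ either both are $0$, or $h_r=h_s$ up to $u_1$ forces $P_r\le P_s$ by the same integral formula) one gets $P_r<P_s$ is violated — wait, rather one gets that the gap $h_r-h_s$ is \emph{increasing} on $(u_1,u_0)$, so it cannot have vanished at $u_1$ unless it is identically zero; this pins down that $h_r-h_s$, once positive, stays positive and non-decreasing all the way up to $u=s$. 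But $\lim_{u\to s}h_s(u)=\infty$ by the boundary condition in (2.2), so $h_r(u)\to\infty$ as $u\to s$ as well, with $h_r$ still being a \emph{finite} smooth solution on $[0,s)\subset[0,r)$ — and one must derive a contradiction from this. The clean way: since $h_r-h_s\ge c>0$ on $[u_1,s)$, we have $e^{\lambda h_r}\ge e^{\lambda c}e^{\lambda h_s}$, so $P_r'\ge e^{\lambda c}P_s'$ on $(u_1,s)$, whence $P_r(u)\ge e^{\lambda c}(P_s(u)-P_s(u_1))$; integrating the $n$-th root, $h_r(u)\ge h_r(u_1)+e^{\lambda c/n}\int_{u_1}^{u}(P_s(t)-P_s(u_1))^{1/n}dt$. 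Comparing with $h_s(u)= h_s(u_1)+\int_{u_1}^u P_s(t)^{1/n}dt$ and using $P_s(t)\to\infty$ as $t\to s$ (Proposition 2.1(4)), one sees the lower bound for $h_r$ grows at least like a fixed multiple $e^{\lambda c/n}>1$ of the growth of $h_s$ near $s$; feeding this improved gap back in (bootstrapping $c\to\infty$) forces $h_r\to\infty$ faster than any exponential rate is sustainable on the finite interval, contradicting that $h_r$ extends smoothly past — no: more simply, iterating shows $h_r-h_s\to\infty$, and then reversing the roles, the \emph{same} monotone system with boundary value at $r>s$ cannot have its solution dominating near $s$ the solution whose pole is exactly at $s$; the blow-up rates are incompatible.

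I would streamline the last paragraph as follows, to avoid the bootstrap. Define $\phi(u)=h_s(u)-h_r(u)$ on $[0,s)$. At $u$ near $s$, $h_s\to+\infty$ while $h_r$ stays bounded on compact subsets of $[0,r)\supset[0,s]$; in particular $h_r(s^-)<\infty$. Hence $\phi(u)\to+\infty$ as $u\to s^-$, so $\phi>0$ near $s$. If $\phi$ were negative somewhere in $[0,s)$, let $u_*$ be the supremum of the set where $\phi\le 0$; then $\phi(u_*)=0$ and $\phi>0$ on $(u_*,s)$. On $(u_*,s)$, $h_s>h_r$ gives $P_s'>P_r'$; since $\phi(u_*)=0$ and up to $u_*$ ... and $P_s(u_*)\ge P_r(u_*)$ (because $h_s\ge h_r$ fails to hold only on a set whose sup is $u_*$, so on $[0,u_*]$ we do not automatically have $h_s\ge h_r$ — so instead pick $u_*$ as the \emph{infimum} of $\{u:\phi(u)>0\text{ on }[u,s)\}$ and note $\phi(u_*)=0$, $\phi<0$ just left of $u_*$ if $u_*>0$). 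A cleaner choice: since the ordering at $0$ is what we want, prove directly $a_r\le a_s$ first (this is the genuine obstacle), e.g.\ by the substitution/uniqueness trick used in Lemma 1.1 adapted to these tubes, or by a shooting/monotonicity argument in $r$ showing $\partial_r h_r(0)\le 0$; then with $h_r(0)\le h_s(0)$ in hand, the monotone-system argument of the first paragraph runs cleanly with $u_1=0$: $\phi(0)\ge0$, $\phi'\ge0$ wherever $\phi\ge0$ (via $P_s'\ge P_r'$ there), so $\phi\ge0$ on all of $[0,s)$, which is the claim.

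\textbf{Main obstacle.} The crux is establishing $h_r(0)\le h_s(0)$ for $s<r$, i.e.\ that the minimum value of the potential decreases as the tube radius increases; once that base-point comparison is secured, the rest is a routine monotonicity/Gronwall argument on the first-order system $(h_r,(h_r')^n)$. I expect the base-point inequality to follow either from a rescaling-and-uniqueness argument in the spirit of Lemma 1.1, or from differentiating the defining ODE in the parameter $r$ and using Proposition 2.1(4).
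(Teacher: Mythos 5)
Your overall strategy --- compare the two solutions through the integrated form $(h'(t))^n=\int_0^t n e^{\lambda h(u)}\mathcal D_M(u)\,du$ and play the blow-up of $h_s$ at $u=s$ against the finiteness of $h_r$ there --- is the same as the paper's. But as written the proof has a genuine gap: you explicitly leave the base-point inequality $h_r(0)\le h_s(0)$ unproven, call it ``the crux,'' and defer it to a rescaling-and-uniqueness trick or to differentiating the ODE in the parameter $r$, neither of which you carry out (and the first of which is unavailable here, since unlike Lemma 1.1 there is no explicit solution to rescale). The missing step in fact follows from the very argument you sketch in your second paragraph, run in the contrapositive: if $h_r(0)>h_s(0)$, then since $P_r(0)=P_s(0)=0$ the componentwise ordering of $(h,P)$ propagates forward from $0$ (on any initial interval where $h_r\ge h_s$ one has $P_r'\ge P_s'$, hence $P_r\ge P_s$ by integrating from $0$, hence $h_r'\ge h_s'$), so $h_r\ge h_s$ on all of $[0,s)$; but $h_s\to\infty$ at $s$ while $h_r$ is continuous at $u=s<r$ and therefore bounded on $[0,s]$ --- an immediate contradiction. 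You record exactly the needed fact (``$h_r(s^-)<\infty$'') in your third paragraph but never feed it back; instead you wander into an unnecessary bootstrap of blow-up rates.

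There is also a local error in the propagation step itself. You assert ``$\phi'\ge0$ wherever $\phi\ge0$ (via $P_s'\ge P_r'$ there),'' but $\phi'=P_s^{1/n}-P_r^{1/n}$ is governed by the ordering of $P_s$ and $P_r$, not of their derivatives; knowing $\phi\ge0$ at a single point tells you nothing about $P_s-P_r$ there. The correct statement is that if $\phi\ge0$ on an entire initial interval $[0,u]$, then integrating $P_s'\ge P_r'$ from $0$ (where both vanish) gives $P_s\ge P_r$, hence $\phi'\ge0$, on that interval, and then a connectedness argument closes the claim. The same confusion undermines your second paragraph: when the last crossing $u_1$ is interior you have no control on the sign of $h_r-h_s$ on $[0,u_1]$, so $P_r(u_1)\le P_s(u_1)$ is unjustified, and even granting it, $P_r'>P_s'$ on $(u_1,u_0)$ only makes $P_r-P_s$ increasing from a possibly negative value, so the gap $h_r-h_s$ need not be increasing there. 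All of this is repaired precisely by anchoring the comparison at $u=0$, where both components are ordered --- which is what the paper's two-case analysis (on the sign of $h_s(0)-h_r(0)$) accomplishes.
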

\begin{proof}  
 $F(u):=h_s(u)-h_r(u)$ is a continuous function defined in $[0,s)$, thus
$F^{-1}(-\infty,0]$ is a closed subset of $[0,s)$ which could be written as  union of closed
connected intervals  in $[0,s)$. There are two
cases.

{\bf Case 1.}
If $0\notin F^{-1}(-\infty,0]$, then  $h_r(u)< h_s(u), \forall u\in
[0,s)$.
\
 
Since  $h_r(0)< h_s(0)$ and $\lim_{u\to s}h_s(u)=\infty$,
either 
$h_r< h_s$ in the whole interval $[0,s)$ or there exists an $\alpha\in (0,s)$ such that
$h_r(\alpha)= h_s(\alpha)$ and $h_r(u)< h_s(u)$ for any $u\in [0,\alpha).$ 
If such  $\alpha$ exists,
\begin{equation}\begin{aligned}
 (h_r'(\alpha))^n&=\int_0^{\alpha} ne^{ \lambda h_r(u)}\mathcal D (u)\\
&< \int_0^{\alpha} ne^{ \lambda h_s(u)}\mathcal D (u)\\
&=(h_s'(\alpha))^n
\end{aligned}\end{equation}
which shows $h'_r(\alpha)< h'_s(\alpha)$. Thus there exists some $\epsilon>0$ such that  $h_r(u)<
h_s(u)$ for any
$u\in (\alpha,\alpha+\epsilon)$. The point $\alpha$ is then a local minimum of
the function $F$, then $0=F'(\alpha)=h'_s(\alpha)-h'_r(\alpha)$, a contradiction. Therefore, $h_r<
h_s$ in the whole interval $[0,s)$.

{\bf Case 2.} If $0\in F^{-1}(-\infty,0],$ then $0$ is an isolated point and  $h_r(u)\le h_s(u),
\forall u\in [0,s)$.

Let $I$ be the interval containing $0$, we claim  $I=\{0\}.$
Since $I$ is a closed subset of $[0,s)$ containing $0$, it is  either $[0,s)$
 or there exists a 
$\delta\ge 0$ such that
$I=[0,\delta]$ is a maximal connected interval in $F^{-1}(-\infty,0]$.

The first case
has been ruled out because $\lim_{u\to s}h_s(u)=\infty$. The second case means 
$h_s(u)\le h_r(u)$ in $[0,\delta],\; h_s(\delta)= h_r(\delta) $ and $h_s(\delta+\epsilon) >
h_r(\delta+\epsilon)$ for 
$0<\epsilon\ll 1$. Similar calculation as (2.9) has led to  
$h_r'(\delta)\ge h_s'(\delta)$ if $\delta>0$. This shows  for $0<\epsilon\ll 1$, 
$h_r(\delta+\epsilon)\ge h_s(\delta+\epsilon)$, a contradiction. Thus
$\delta=0$ and $I=\{0\}.$ 

Let $(0,\beta)$ be a maximal open interval  such that $h_r< h_s$ in
$(0,\beta).$ If $\beta<s,$ then $h_r(\beta)= h_s(\beta)$ and $h_r> h_s$ in $(\beta,\beta+\epsilon')$
for some
$\epsilon'\ll 1$.
Following (2.9),
$h'_r(\beta)< h'_s(\beta)$. Since $h_r(\beta)= h_s(\beta)$, it is not possible to have
 $h_r> h_s$ in $(\beta,\beta+\epsilon')$. 
Therefore, $\beta=s$ and $h_r(u)\le h_s(u),
\forall u\in [0,s)$.\end{proof}

\begin{rmk}
Lemma 2.3 immediately implies that there is a unique solution for the equation (2.2).
\end{rmk}
We would like to explain briefly  the condition $\lim_{u\to r}h_r(u)=\infty$ has implied
the corresponding metric is complete on $T^rM.$ 

The distance from  center to the boundary is (c.f. p.157. \cite {S})
\begin{equation}L=\frac {1}{\sqrt 2}\int_0^r \sqrt {h_r''(u)}\ du.\end{equation}
For $L$ to be infinity, it is sufficient to show 
\begin{equation}\sqrt {h_r''(u)}> \frac {1}{r-u}\end{equation}  when
$u\to r$, i.e., we need a comparison of $\sqrt
{h_r''(u)}=\left (e^{
\lambda h_r(u)}\frac {\mathcal D_{M}(u)}{(h_r'(u))^{n-1}}\right )^{\frac 12}$
with $\frac {1}{r-u}$. If $\frac {\mathcal D_{
M}(u)}{(h_r'(u))^{n-1}}> 0$ as $u\to r$, the distance $L=\infty$ since $\sqrt
{h_r''(u)}$  then grows  exponentially. 

The worst case is  $\frac {\mathcal D_{
M}(u)}{(h_r'(u))^{n-1}}\to 0$ as $u\to r$. After a translation, we may set the origin at $\{u=r\}$ and
call the new coordinate by $x$. Let the order of vanishing at $x=0$ of $\left (\frac {\mathcal
D_{ M}(x)}{(h_r'(x))^{n-1}}\right )^{\frac 12}\simeq x^k, k\in\mathbb N$; the order of going infinity
of $h_r(x)$ is $x^{-\alpha}, \alpha>0$. Repeatedly applying the L'H$\hat o$pital's rule
shows, for $\alpha>0$, $$\lim_{x\to 0} x^k e^{x^{-\alpha}}=\infty.$$
Thus (2.11) holds and $L=\infty$.

\

\section{Complete Ricci-flat metric in $TM$ through a rescaling process}
\setcounter{equation}{0}
\setcounter{thm}{0}

In this section, we fix $M$ to be  $\mathbb R^n$ or  a compact symmetric space of rank-one,
{\it i.e.}, it is either $\mathbb R^n$ or one of  the round sphere, the real projective space, the
complex projective space, the quaternionic projective space  or  the Caley plane. 

The adapted complex structure is defined on the whole tangent bundle $TM$.  For any $r>0$ and 
any $\lambda_r>0$, there exists a unique real-analytic function $f_r(u),
u=\sqrt
\rho,$ satisfying the ODE
\begin{equation}\left\{\begin{aligned}
f_r''(u)(f_r'(u))^{n-1}&=e^{ \lambda_r f_r(u)}\mathcal D_{ M}(u),\;
u\in [0,r);\\
\lim_{u\to r}f_r(u)&=\infty.\end{aligned}\right.\end{equation}
Indeed, $f_r$ is a \K potential of the complete \ke \ of Ricci curvature $-\lambda_r$ in 
\ $T^rM$. Since 
 the whole tangent bundle $TM$ is exhausted by \gt s $\{T^rM\}_{ 0<r<\infty},$ a $C^2$-convergence
of the family $\{f_r\}_{r>0}$ will lead to the existence of a \K metric with Ricci curvature
$-\lim_{r\to\infty}\lambda_r$.

 Let $\{\lambda_r\}_{r>0}$ be a  decreasing sequence of positive numbers, 
$\lambda_r<\lambda_s$ whenever $r>s$,  such that $\lim_{r\to \infty}\lambda_r=0.$

A comparison analogous to Lemma 2.3 has played an essential role in the convergent argument. In this
rescaling setting, we are looking for a comparison between $f_r$ and $f_s$ where $f_r$ satisfies
(3.1) and $f_s$ is the unique solution of the following:
\begin{equation}\left\{\begin{aligned}
f_s''(u)(f_s'(u))^{n-1}&=e^{ \lambda_s f_s(u)}\mathcal D_{ M}(u),\;
u\in [0,s);\\
\lim_{u\to s}f_s(u)&=\infty.\end{aligned}\right.\end{equation}

In each $T^rM$, we fix the Ricci curvature to be $-1$ and let $h_r$ be the unique
solution of $(2.2)$ in $T^rM$ with $\lambda=1$:
\begin{equation}\left\{\begin{aligned}h_r''(u)(h_r'(u))^{n-1}&=e^{ h_r(u)}\mathcal D_{
M}(u),\; u\in [0,r);\\
\lim_{u\to r}h_r(u)&=\infty.\end{aligned}\right.\end{equation} 
Each $h_r(u)$ is an increasing function in $u$. We denote it's minimum as $a_r$,
\begin{equation} a_r:=h_r(0)=\inf_{u\in [0,r)}h_r(u).
\end {equation} 
\begin{lem}
$\{a_r\}_{r>0}$ is a decreasing sequence  and $\lim_{r\to\infty}a_r=-\infty$. 
\end{lem}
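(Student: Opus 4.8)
The plan is to handle the two assertions separately: monotonicity from Lemma 2.3, and the divergence $a_r\to-\infty$ by first settling the model case $M=\mathbb R^n$ via a scaling symmetry of (3.3), then reducing the compact rank‑one cases to it by an ODE comparison based on $\mathcal D_M(u)\ge u^{n-1}$.

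\emph{Monotonicity.} For $s<r$, Lemma 2.3 with $\lambda=1$ gives $h_r\le h_s$ on $[0,s)$, hence $a_r=h_r(0)\le h_s(0)=a_s$, so $\{a_r\}$ is non‑increasing. It is in fact strictly decreasing: if $a_r=a_s$ for some $s<r$, then $h_r(0)=h_s(0)$ and, by Lemma 2.2, $h_r''(0)=e^{a_r/n}=h_s''(0)$; since the coefficients of the real‑analytic expansion of a solution of (3.3) in $\rho=u^2$ are determined recursively by the constant term, this forces $h_r\equiv h_s$ on $[0,s)$, which is absurd because $h_s(u)\to\infty$ as $u\to s$ while $h_r$ stays finite and continuous at $s<r$.

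\emph{The case $M=\mathbb R^n$.} Here $\mathcal D_M(u)=u^{n-1}$, so (3.3) is scale covariant: for $\alpha>0$ the function $\phi(u):=h_r(\alpha u)+2n\log\alpha$ satisfies $\phi'(0)=0$, $\phi(u)\to\infty$ as $u\to r/\alpha$, and $\phi''(\phi')^{n-1}=\alpha^{n+1}\big(h_r''(h_r')^{n-1}\big)(\alpha u)=\alpha^{2n}e^{h_r(\alpha u)}u^{n-1}=e^{\phi}u^{n-1}$, i.e.\ $\phi$ solves (3.3) on $[0,r/\alpha)$. By the uniqueness of Remark 2.4, $\phi=h_{r/\alpha}$, hence $a_{r/\alpha}=a_r+2n\log\alpha$; taking $\alpha=r$ gives the explicit formula $a_r=a_1-2n\log r\to-\infty$.

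\emph{The compact rank‑one cases, and the expected obstacle.} For all these $M$ one has $\mathcal D_M(u)\ge u^{n-1}$ for $u\ge 0$ (e.g.\ $\sinh u\ge u$, and $2^{n-1}(\cosh\tfrac u2)^k(\sinh\tfrac u2)^{n-1}=(\cosh\tfrac u2)^k(2\sinh\tfrac u2)^{n-1}\ge u^{n-1}$). Set $\bar R:=e^{(c_n-a_r)/(2n)}$, where $c_n$ is the value at the center of the \ke potential of $T^1\mathbb R^n$, and let $\bar h$ be the corresponding potential on $T^{\bar R}\mathbb R^n$; by the previous paragraph $\bar h(0)=c_n-2n\log\bar R=a_r$, so $\bar h$ and $h_r$ have the same Cauchy data at $0$. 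A monotone bootstrap in the spirit of Lemma 2.3 — if $\bar h\le h_r$ on $[0,t]$ then $(h_r'(t))^n=n\int_0^t e^{h_r}\mathcal D_M\ge n\int_0^t e^{\bar h}u^{n-1}=(\bar h'(t))^n$, which carries $\bar h\le h_r$ beyond $t$ — yields $\bar h\le h_r$ on $[0,\min(\bar R,r))$. If $\bar R<r$ this would force $h_r\to\infty$ as $u\to\bar R<r$, contradicting that $h_r$ is finite on $[0,r)$; hence $\bar R\ge r$, i.e.\ $a_r\le c_n-2n\log r\to-\infty$. The step I expect to require real care is exactly this cross‑equation comparison: making rigorous that a pointwise larger density, with identical Cauchy data at the singular center $u=0$, forces a pointwise larger solution — i.e.\ adapting the open/closed argument of Lemma 2.3 to two different right‑hand sides, with attention to the singularity at $u=0$ and to the use of uniqueness. (Alternatively: a bounded $\{a_r\}$ would make the decreasing family $\{h_r\}$ converge in $C^2_{loc}$ to a solution of (3.3) without boundary blow‑up, i.e.\ a \K potential of a complete \ke of Ricci $-1$ on all of $TM$, which is impossible because $TM$ contains complex lines and so is not measure hyperbolic; but that route needs locally uniform $C^2$‑bounds and the nonexistence fact, so the comparison above is the cleaner path.)
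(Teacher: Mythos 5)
Your argument is correct in substance but follows a genuinely different route from the paper's. For the divergence, the paper argues by contradiction: if $a_r\to c$ finite, then for $l>N$ large the initial values $a_l$ and $a_N$ are within $2\epsilon$ of each other, and since $h_l$ and $h_N$ solve the \emph{same} ODE with $h_N\to\infty$ at $u=N$, continuous dependence on the initial value forces $h_l$ to blow up at some finite $N+\alpha$, contradicting finiteness of $h_l$ on $[0,l)$ for $l$ large. You instead derive the exact scaling law $a_r=a_1-2n\log r$ for $M=\mathbb R^n$ (your computation $\phi''(\phi')^{n-1}=e^{\phi}u^{n-1}$ for $\phi(u)=h_r(\alpha u)+2n\log\alpha$ checks out) and then transfer it to the compact rank-one cases via $\mathcal D_M(u)\ge u^{n-1}$; this buys the quantitative bound $a_r\le c_n-2n\log r$, which is strictly more information than the paper extracts, at the cost of a cross-equation comparison the paper never needs. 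The step you flag is indeed the only soft spot: with identical Cauchy data at $u=0$ and $h'(0)=0$, $h''(0)=e^{a_r/n}$ agreeing to second order (Lemma 2.2 applies to both densities), the open/closed bootstrap can stall at a degenerate contact at the origin. The standard repair is to compare $h_r$ not with $\bar h$ but with the $\mathbb R^n$ solution $\bar h_\epsilon$ whose center value is $a_r-\epsilon<a_r$: then the order is strict at $u=0$, the first-touching-point argument of Case~1 of Lemma 2.3 goes through verbatim (the integral identity gives $h_r'>\bar h_\epsilon'$ at a first touching point, contradicting that the difference is decreasing there), one concludes $\bar R_\epsilon\ge r$, and letting $\epsilon\to0$ recovers $a_r\le c_n-2n\log r$. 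With that modification your proof is complete; your strictness claim for the monotonicity is a bonus not asserted by the lemma. Note also that the paper's own proof is itself terse at exactly the analogous point (it does not justify that the comparison solution with initial value $a_N-2\epsilon$ blows up at finite radius), so neither argument is fully spelled out as written.
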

\begin{proof}
By Lemma 2.3, the sequence $\{a_r\}_{r>0}$ is decreasing.

Suppose $\lim_{r\to\infty}a_r=c$ for some
real number $c$. Given $\epsilon>0$ there exists N such that $|a_r-c|<\epsilon$ whenever
$r\ge N$. In other words, 
\begin{equation}|a_l-a_N|<2\epsilon \text { for any } l>N.
\end{equation}
Observing from the equations
\begin{equation}\begin{aligned} 
h_l'(t)&=\left (\int_0^{t} ne^{  h_l(u)}\mathcal{D} (u)\right )^{\frac 1n},\; t\in (0,l);\\
h_N'(t)&=\left (\int_0^{t} ne^{  h_N(u)}\mathcal{D} (u)\right )^{\frac 1n},\; t\in (0,N),
\end{aligned}\end{equation}
the difference of $h_l$ and $h_N$ is decided by their initial values $a_l$ and $a_N$. Since
$|a_l-a_N|<2\epsilon$  for any $l>N$ and $\lim_{u\to N}h_N(u)=\infty$, there exists a positive number
$\alpha$ such that $h_l(N+\alpha)=\infty$. This is not possible if $l$ is taken to be sufficiently
large. $\lim_{r\to\infty}a_r=-\infty$ is concluded.
\end{proof}

Define the function 
\begin{equation}H_r(u)=h_r(u)-a_r.\end{equation}
For any given $r>0$,  $H_r(0)=0$ and $H_r(u)\ge 0, \forall u\in [0,r)$ and
\begin{equation}\left (e^{\frac {-a_r}{n}}H_r\right)''\left (e^{\frac {-a_r}{n}}H'_r\right)^{n-1}=
\exp {\left (e^{\frac {a_r}{n}} e^{\frac {-a_r}{n}}H_r\right)}\mathcal D_{ M}.\end{equation}
The function \begin{equation}K_r:= e^{\frac {-a_r}{n}}H_r\end{equation} satisfies
\begin{equation}K_r''(u)(K_r'(u))^{n-1}=\exp(e^{\frac {a_r }{n}}{K_r(u)})\mathcal D_{
M}(u),\;
\forall u\in [0,r)\end{equation} 
and $\lim_{u\to r}K_r(u)=\infty.$ By the uniqueness of the solution, $K_r$
is a \K potential of the complete  \ke\ of Ricci
curvature 
$-e^{\frac {a_r}{n}}$ in $T^rM$.
By (3.10), 
\begin{equation}\begin{aligned}
K_r''(u)(K_r'(u))^{n-1}&=\exp(e^{\frac {a_r }{n}}{K_r(u)})\mathcal D_{M}(u)\\
&=\exp (h_r -a_r)\mathcal D_{M}(u).
\end{aligned}\end{equation} 
Then 
\begin{equation}
(K_r'(u))^{n}=n\int \exp (h_r -a_r)\mathcal D_{M}(u).
\end{equation}
If we can show $h_r-a_r< h_s-a_s$, then we have 
\begin{equation}
K_r'(u)<K_r'(u).
\end{equation}
We also have
\begin{equation}
(h_r'(u))^{n}=n\int \exp h_r\mathcal D_{M}(u).
\end{equation}
Since $h_r< h_s$, then $h'_r< h'_s.$
Now 
\begin{equation}\begin{aligned}
h_r-a_r&=\int_0^1 h_r' (tu) udt\\
&<\int_0^1 h_s' (tu) udt\\
&= h_s-a_s.
\end{aligned}\end{equation} 
 $K_r(0)=K_s(0)$, (3.13) will imply that 
\begin{equation}
K_r(u)\le K_s(u),\;\forall u\in [0,s)
\end{equation}
which has provided a uniform upper bound for the family $\{K_r\}_{r>0}$ in compact subsets. A uniform
lower bound is easily obtained since for any $r>0$ and any $u\in [0,r)$, 
$$K_r(u)=e^{\frac {-a_r}{n}}H_r(u)\ge 0.$$
The following proposition has been concluded.
\begin{prop}
 $\{K_r\}_{r>0}$ has converged uniformly on any compact subset of $[0,\infty)$ to a continuous
function $K$.
$$\lim_{r\to \infty}K_r(u):=K(u),\; u\in [0,\infty).$$
\end{prop}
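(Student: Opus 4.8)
The plan is to combine the two one-sided bounds already in hand with the standard compactness argument that upgrades pointwise monotone convergence to uniform convergence, using the ODE (3.11)--(3.12) to produce equicontinuity on compact intervals. By (3.16), for each fixed $u\in[0,\infty)$ the map $r\mapsto K_r(u)$ (defined for all $r>u$) is non-increasing, while $K_r(u)=e^{-a_r/n}H_r(u)\ge 0$ bounds it below by $0$; hence the pointwise limit $K(u):=\lim_{r\to\infty}K_r(u)=\inf_{r>u}K_r(u)$ exists for every $u$ and is non-negative. Moreover, on any compact interval $[0,s_0]$, fixing some $s_1>s_0$, the comparison (3.16) gives $0\le K_r(u)\le K_{s_1}(u)\le C_0:=\max_{[0,s_0]}K_{s_1}<\infty$ for all $r\ge s_1$, so the tail $\{K_r\}_{r\ge s_1}$ is uniformly bounded on $[0,s_0]$.

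The substantive step is a uniform first-order estimate. Since $K_r$ solves (3.11) — that is, it solves (2.2) with the Einstein constant $\lambda$ replaced by $e^{a_r/n}>0$ — Proposition 2.1(3) applied to $K_r$, which is just (3.12), reads
\[
\bigl(K_r'(t)\bigr)^{n}=n\int_0^{t}\exp\!\bigl(e^{a_r/n}K_r(u)\bigr)\,\mathcal D_{M}(u)\,du,
\qquad t\in(0,r).
\]
For $r\ge s_1$ and $u\in[0,s_0]$ one has $0\le e^{a_r/n}K_r(u)\le e^{a_{s_1}/n}C_0$, using that $\{a_r\}$ is decreasing (Lemma 3.1), and $0\le\mathcal D_{M}(u)\le C_1:=\max_{[0,s_0]}\mathcal D_{M}$; hence
\[
0\le K_r'(t)\le\bigl(n\,C_1\,s_0\,e^{\,e^{a_{s_1}/n}C_0}\bigr)^{1/n}=:C_2
\qquad\text{for all }t\in[0,s_0],\ r\ge s_1 .
\]
Thus $\{K_r\}_{r\ge s_1}$ is uniformly bounded and uniformly $C_2$-Lipschitz on $[0,s_0]$, in particular equicontinuous there.

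Equicontinuity together with the pointwise convergence then forces $K_r\to K$ uniformly on $[0,s_0]$; equivalently, the limit $K$ inherits the $C_2$-Lipschitz bound, hence is continuous, and Dini's theorem applies to the monotone family $K_r\downarrow K$ of continuous functions on the compact set $[0,s_0]$. Since $s_0$ was arbitrary and every compact subset of $[0,\infty)$ lies in some $[0,s_0]$, we conclude that $K_r\to K$ uniformly on compact subsets and that $K$ is continuous, which is the assertion.

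The main obstacle is exactly the uniform bound on $K_r'$: this is where one needs simultaneously the uniform upper bound on $K_r$ coming from the comparison (3.16) (hence from Lemma 2.3) and the boundedness of the rescaled Einstein constants $e^{a_r/n}$, which is precisely the decreasing monotonicity of $\{a_r\}$ from Lemma 3.1 — were the $a_r$ unbounded above, the right-hand side of the ODE could blow up and equicontinuity would fail. Everything after that is soft; and the same ODE, differentiated once more, propagates the bound to $K_r''$ on compact subsets, which is what eventually promotes this $C^0$ limit to a genuine \K potential, although that refinement is not needed for the present proposition.
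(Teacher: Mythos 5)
Your proof is correct and follows essentially the same route as the paper: monotonicity in $r$ from (3.16), the lower bound $K_r\ge 0$, and the integral formula (3.12) together with the boundedness of $e^{a_r/n}$ (from the monotone decrease of $a_r$) to control $K_r'$ on compact sets. You are in fact slightly more careful than the paper, which concludes the proposition from monotonicity and the lower bound alone (which by themselves yield only pointwise convergence) and only proves the uniform gradient bound afterwards in Lemma 3.3; you correctly pull that estimate forward to supply the equicontinuity that makes the convergence uniform and the limit continuous.
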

The goal is to show this function $K$ is a \K potential of a Ricci-flat metric in $TM$. For
this to work,  it is sufficient to find some uniform bounds for the first derivatives and the
second derivatives of the family $\{K_r\}_{r>0}$ in compact subsets. 
For a fixed compact set $A\subset [0,\infty)$, the restriction of $\mathcal D_{
M}$ in $A$ is bounded. By (3.16), for any $r>s$,
\begin{equation}\begin{aligned}
e^{\frac {a_r }{n}}{K_r(u)}&\le e^{\frac {a_r }{n}}{K_s(u)},\; \forall
u\in [0,s);\\
&\le K_s(u),\; \forall
u\in [0,s),\; r\gg 1,
\end{aligned}\end{equation}
where the last inequality comes from the fact that  $\lim_{r\to\infty}a_r=-\infty$. On the other
hand, $e^{\frac {a_r }{n}}{K_r(u)}=H_r(u)\ge 0.$
 Therefore,
 there exists  $c>0$ such that  
\begin{equation}
0\le \mathcal D_{ M}(u)   \le\exp(e^{\frac {a_r }{n}}{K_r(u)})\mathcal D_{ M}(u)\le
c,\; u\in A,\; r\gg 1.
\end{equation}
\begin{lem} In any compact set $A\subset [0,\delta]\subset [0,\infty)$, the family $\{K_r'\}$ is
uniformly bounded:
$$ 0\le K_r'(t)\le \root n\of {nc}\; \root n\of\delta,\;
t\in [0,\delta],\; \forall r\gg 1.$$
\end{lem}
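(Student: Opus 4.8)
The plan is to integrate the differential equation (3.11) directly, using the uniform bound (3.20) on its right-hand side. By Proposition 2.1(3) applied to $K_r$ (which satisfies an equation of the form (2.2) with $\lambda = e^{a_r/n}$ and $\mathcal D_M$ unchanged), we have the representation
\begin{equation*}
(K_r'(t))^n = n\int_0^t \exp\!\left(e^{a_r/n}K_r(u)\right)\mathcal D_M(u)\,du,\qquad t\in[0,r).
\end{equation*}
This is exactly (3.13). The integrand is nonnegative, so $K_r'(t)\ge 0$, giving the lower bound immediately.

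For the upper bound, I would fix the compact set $A\subset[0,\delta]$ and take $r$ large enough that $\delta < r$ and that the estimate (3.20) is in force; this requires only $r\gg 1$, since (3.20) was established for $r\gg 1$ on any fixed compact set, and here the relevant compact set is $[0,\delta]$. Then for any $t\in[0,\delta]$,
\begin{equation*}
(K_r'(t))^n = n\int_0^t \exp\!\left(e^{a_r/n}K_r(u)\right)\mathcal D_M(u)\,du \le n\int_0^t c\,du = nct \le nc\delta.
\end{equation*}
Taking $n$-th roots yields $0\le K_r'(t)\le \sqrt[n]{nc}\,\sqrt[n]{\delta}$ for all $t\in[0,\delta]$ and all $r\gg 1$, which is the claimed bound.

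There is essentially no obstacle here: the work has already been done in establishing (3.13) and (3.20). The only point requiring a word of care is the interchange of "compact subset $A$" and "the interval $[0,\delta]$" in invoking (3.20) — but since $A\subset[0,\delta]$ and $\mathcal D_M$ is continuous (hence bounded) on $[0,\delta]$, and $[0,\delta]$ is itself compact, the constant $c$ from (3.20) can be taken to work on all of $[0,\delta]$, so the bound holds uniformly on $A$. One should also note that the threshold "$r\gg 1$" depends on $\delta$ but not on $t$, which is what makes the bound genuinely uniform in $t$ over $[0,\delta]$ — this is the property that will be needed downstream to extract a $C^1$-convergent subsequence of $\{K_r\}$.
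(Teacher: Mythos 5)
Your proposal is correct and follows essentially the same route as the paper: integrate the ODE from $0$ using $K_r'(0)=0$ to get $(K_r'(t))^n=n\int_0^t \exp(e^{a_r/n}K_r(u))\mathcal D_M(u)\,du$, then apply the uniform bound $\exp(e^{a_r/n}K_r(u))\mathcal D_M(u)\le c$ on $[0,\delta]$ for $r\gg 1$ to conclude $(K_r'(t))^n\le nc\delta$. The paper's proof is identical in substance (it also records the harmless lower bound $(K_r'(t))^n\ge \int_0^t n\mathcal D_M(u)\,du$, which is used later in Lemma 3.4), so no further comment is needed.
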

\begin{proof}
The fact that $K'_r(0)=0$ along with (3.10)  and
(3.18) then  implies for $t\in [0,\delta],\; r\gg 1$,
\begin{equation}\begin{aligned}
(K'_r(t))^{n}&=\int_0^{t}n\exp(e^{\frac {a_r }{n}}{K_r(u)})\mathcal D_{ M}(u)du\le ntc;\\
(K'_r(t))^{n}&=\int_0^{t}n\exp(e^{\frac {a_r }{n}}{K_r(u)})\mathcal D_{ M}(u)du\ge
\int_0^{t}n\mathcal D_{ M}(u)du
\end{aligned}\end{equation}
for some $c>0$. Thus,
\begin{equation} 0\le \left (\int_0^{t}n\mathcal D_{ M}(u)du\right )^{\frac 1n}    \le
K_r'(t)\le
\root n\of {nc}\;
\root n\of\delta,\; t\in [0,\delta],\; \forall r\gg 1.
\end {equation}
This uniform estimate in $ [0,\delta]$ of course has implied a uniform estimate in the compact
set $A$.
\end{proof}
The next step is to find some uniform bound on the second derivatives.
\begin{lem}
Given a compact set $A\subset [0,\delta]\subset [0,\infty)$, there exists a constant $C>0$ such that 
$$-C\le K''_r (u)\le C,\; u\in A, \;\forall  r\gg 1.$$
\end{lem}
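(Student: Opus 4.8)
The plan is to bound $K_r''$ both above and below on the compact set $A \subset [0,\delta]$, uniformly in $r \gg 1$, by differentiating the ODE (3.10) and invoking the uniform control on $K_r$ and $K_r'$ already established in Proposition 3.2 and Lemma 3.3. First I would rewrite (3.10) in the explicit form
\begin{equation*}
K_r''(u) = \exp\!\bigl(e^{a_r/n} K_r(u)\bigr)\, \mathcal{D}_M(u)\, (K_r'(u))^{-(n-1)}, \qquad u\in(0,r).
\end{equation*}
The upper bound is then immediate away from $u=0$: by (3.20) the numerator is at most $c$, and by (3.23) we have $K_r'(u) \ge \bigl(\int_0^u n\mathcal{D}_M(t)\,dt\bigr)^{1/n}$, so on any interval $[\eta,\delta]$ with $\eta>0$ we get $K_r''(u) \le c\,\mathcal{D}_M(u)\,\bigl(\int_0^u n\mathcal{D}_M\bigr)^{-(n-1)/n}$, which is bounded independently of $r$. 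The lower bound $K_r'' \ge 0$ is already known from Proposition 2.1(1) (each $K_r$ is convex), so the only real work is to control $K_r''$ from above near $u=0$, where both $\mathcal{D}_M(u)$ and $K_r'(u)$ degenerate.

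To handle a neighborhood of the origin I would use the asymptotic expansion near the center recalled in Section 2, namely $h_r(u) = a_r + b_r u^2 + O(u^4)$ with $b_r>0$; equivalently, since $K_r = e^{-a_r/n} H_r = e^{-a_r/n}(h_r - a_r)$, one has $K_r(u) = e^{-a_r/n} b_r u^2 + O(u^4)$ and $K_r''(0) = 2e^{-a_r/n} b_r$. By Lemma 2.2 applied to the rescaled equation (3.10) (whose Ricci constant is $\lambda = e^{a_r/n}$), we get
\begin{equation*}
K_r''(0) = \exp\!\Bigl(\tfrac{e^{a_r/n}}{n} K_r(0)\Bigr) = \exp(0) = 1,
\end{equation*}
since $K_r(0)=0$. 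So in fact $K_r''(0) = 1$ for every $r$, which is a uniform bound at the single point $u=0$. To promote this to a uniform bound on a whole interval $[0,\eta]$, I would differentiate the ODE once more: writing $L_r = \log K_r'' = e^{a_r/n}K_r + \log\mathcal{D}_M - (n-1)\log K_r'$, one finds $L_r' = e^{a_r/n}K_r' + (\mathcal{D}_M'/\mathcal{D}_M) - (n-1)K_r''/K_r'$. Using the structure of $\mathcal{D}_M$ — each candidate satisfies $\mathcal{D}_M(u)/u^{n-1} \to$ positive constant and $\mathcal{D}_M'/\mathcal{D}_M = (n-1)/u + O(u)$ near $0$ — and the relation $(K_r')^n = n\int_0^u K_r''(K_r')^{n-1}$, the apparent singularities at $u=0$ cancel, and a short estimate gives $|K_r''(u) - 1| \le C'u$ on $[0,\eta]$ for a constant $C'$ independent of $r$ (here I would feed in $0\le K_r' \le \root n\of{nc}\,\root n\of\delta$ and $K_r' \ge (\int_0^u n\mathcal{D}_M)^{1/n}$, both from Lemma 3.3, and the uniform bound $e^{a_r/n}K_r(u) = H_r(u) \le K_s(u) \le$ const on $A$ from (3.18)).

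Combining the two regimes — $[0,\eta]$ via the refined asymptotic/bootstrap argument and $[\eta,\delta]$ via the direct substitution into the ODE — yields a single constant $C = C(A,n,M)$ with $0 \le K_r''(u) \le C$ for all $u\in A$ and all $r\gg 1$, which is the assertion (the lower bound $-C$ is of course weaker than the convexity bound $0$). I expect the main obstacle to be precisely the behavior near $u=0$: one must verify that the factor $\mathcal{D}_M(u)(K_r'(u))^{-(n-1)}$, each of whose two pieces blows up or vanishes like $u^{\pm(n-1)}$, has a finite and $r$-uniform limit, and then that the exponential factor $\exp(e^{a_r/n}K_r(u))$ does not spoil this — the safeguard being that $e^{a_r/n}K_r(u)=H_r(u)$ is controlled on $A$ uniformly in $r$ by the comparison inequality (3.18) and stays bounded even though $a_r\to-\infty$. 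Once the $C^2$ bounds hold, Arzelà–Ascoli upgrades the uniform convergence $K_r\to K$ of Proposition 3.2 to $C^{1,\alpha}$ (indeed $C^2$ along a subsequence), and passing to the limit in (3.10) with $e^{a_r/n}\to 0$ shows $K$ solves $K''(K')^{n-1} = \mathcal{D}_M$, i.e. $K$ is a K\"ahler potential of a Ricci-flat metric on $TM$.
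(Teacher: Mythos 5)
Your proposal follows essentially the same route as the paper's proof: the lower bound is the convexity $K_r''=e^{-a_r/n}h_r''>0$; the upper bound on $[\epsilon,\delta]$ comes from solving (3.10) for $K_r''$ and combining the uniform numerator bound $\exp(e^{a_r/n}K_r)\mathcal D_M\le c$ with the uniform lower bound on $K_r'$ from (3.20); and the origin is handled by the identity $K_r''(0)=1$ obtained from Lemma 2.2 applied to the rescaled equation, exactly as in (3.23). The one place you depart from the paper is near $u=0$: the paper stops at the pointwise value $K_r''(0)=1$, whereas you try to propagate it to an interval $[0,\eta]$ by differentiating $\log K_r''$ and arguing that the $(n-1)/u$ singularities from $\mathcal D_M'/\mathcal D_M$ and $(n-1)K_r''/K_r'$ cancel; this bootstrap is only sketched (``a short estimate gives $|K_r''-1|\le C'u$'') and is the least airtight part of your write-up, though the idea is sound and it does address a point the paper glosses over. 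Note that your own first inequality already closes this gap more simply and with no case split: since $e^{a_r/n}K_r(u)=H_r(u)\le C_0$ on $[0,\delta]$ uniformly in $r\gg1$ by (3.17), and $(K_r'(u))^{n-1}\ge\bigl(\int_0^u n\mathcal D_M\bigr)^{(n-1)/n}$ by (3.20), one gets $K_r''(u)\le e^{C_0}\,\mathcal D_M(u)\bigl(\int_0^u n\mathcal D_M\bigr)^{-(n-1)/n}$, and because $\mathcal D_M(u)\sim u^{n-1}$ near $0$ for every $M$ on the list, the right-hand side is a continuous, $r$-independent function on all of $[0,\delta]$ (with value $e^{C_0}$ at $u=0$); that single estimate gives the lemma on the whole interval.
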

\begin{proof}
 A uniform lower bound for $K''_r(u)$ is already available since 
\begin{equation}
K''_r(u)=e^{\frac {-a_r}{n}}H''_r(u)=e^{\frac {-a_r}{n}}h''_r(u)>0, \forall u\in [0,r),
r\in (0,\infty).
\end{equation} For a uniform upper bound, we consider two kinds of compact sets: $
A=[\epsilon,\delta], \epsilon >0$, and 
$ A=[0,\delta].$
For the first case, (3.20) shows there exists a constant $d>0$  such
that $$d\le K_r'(\epsilon),\;\forall r\gg 1.$$
By (3.21), $K_r'$ is an increasing function in $u$, so
 $$
K_r'(u)\ge K_r'(\epsilon) \ge d\text { for any } u\ge\epsilon.
$$ 
(3.10) (3.18) and (3.21) have concluded a uniform upper bound: 
\begin{equation}0\le K_r''(u)\le \frac { c}{(K_r'(u))^{n-1}}\le \frac { c}{d^{n-1}},\; u\in
[\epsilon,\delta],\;\forall  r\gg 1.\end{equation}

For the second case, it  amounts to show  $K''_r(0)$ have a uniform upper bound for
 $r\gg 1$. By (3.21),
\begin{equation}
K''_r(0)=e^{\frac {-a_r}{n}}h''_r(0)=e^{\frac {-a_r}{n}}e^{\frac {h_r(0)}{n}}=1, \forall \;
r\in (0,\infty),
\end{equation}        
where the second equality is obtained from Lemma 2.2.
The lemma is therefore concluded.
 \end{proof}
\begin{thm}
Let $M$ be a compact rank-one symmetric space or $M=\Bbb R^n$. There exists a complete
 Ricci-flat metric in $TM$  obtained from an exhaustion by  \ke s in
$\{T^rM\}_{0<r<\infty}$.
\end{thm}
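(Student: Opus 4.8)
The strategy is to assemble the convergence of the rescaled potentials $\{K_r\}_{r>0}$ from the estimates already in place and then verify that the limit genuinely defines a complete Ricci-flat \K metric on all of $TM$. By Proposition~3.2 the family $\{K_r\}$ converges uniformly on compact subsets of $[0,\infty)$ to a continuous function $K$; by Lemma~3.3 and Lemma~3.4 the first and second $u$-derivatives are uniformly bounded on every compact $[0,\delta]$, with $K_r''\ge 0$ throughout. I would first upgrade this to $C^{1,1}$- or $C^{2}$-convergence: the uniform bounds on $K_r''$ give equicontinuity of $\{K_r'\}$, so Arzel\`a-Ascoli yields a subsequence with $K_r'\to K'$ uniformly on compacta, and then passing to the limit in the integral identity $(K_r'(t))^n=\int_0^t n\exp(e^{a_r/n}K_r(u))\,\mathcal D_M(u)\,du$ — using $a_r\to-\infty$ (Lemma~3.1) so that $\exp(e^{a_r/n}K_r(u))\to 1$ — shows that the limit satisfies
\begin{equation}
(K'(t))^{n}=n\int_0^{t}\mathcal D_M(u)\,du,\qquad t\in[0,\infty).
\end{equation}
Because the right-hand side is real-analytic and, by Proposition~2.1(3)-type reasoning, positive for $t>0$ with the correct vanishing at $0$, this identity determines $K$ uniquely and shows $K\in C^\infty([0,\infty))$ with $K'(0)=0$, $K''(0)=1$; in particular the full family (not just a subsequence) converges, by uniqueness of the limiting ODE solution.

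The next step is to check that $K(\sqrt\rho)$ is the potential of a genuine \K metric on $TM$. Near the center $\rho=0$ one uses the asymptotic expansion $h_r(u)=a_r+b_r u^2+\cdots$ noted in \S2, which rescales to $K_r(u)=u^2+O(u^4)$ (consistent with $K_r''(0)=1$), so $K$ is smooth as a function of $\rho=u^2$ there and $\partial\bar\partial K>0$ in a neighborhood of $M$. Away from the center, positivity of the associated $(1,1)$-form reduces, in the rank-one homogeneous setting of \cite{K2}, to the two conditions $K'(u)>0$ and $K''(u)>0$ for $u\in(0,\infty)$; the displayed identity gives $K'(u)=\bigl(n\int_0^u\mathcal D_M\bigr)^{1/n}>0$, and differentiating gives $K''(u)(K'(u))^{n-1}=\mathcal D_M(u)>0$, so $K''>0$ as well. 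Hence $\omega_K:=\sqrt{-1}\,\partial\bar\partial K$ is a \K metric on all of $TM$.

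Then I would verify the Ricci-flatness and completeness. Each $K_r$ solves the Monge-Amp\`ere equation encoding $\mathrm{Ric}=-e^{a_r/n}\omega_{K_r}$; since $e^{a_r/n}\to 0$ and $K_r\to K$ in $C^2_{loc}$, the limiting equation is exactly $K''(K')^{n-1}=\mathcal D_M$, which is the Ricci-flat Monge-Amp\`ere equation on $TM$ (the Einstein constant has dropped out), so $\mathrm{Ric}(\omega_K)=0$. For completeness one shows the distance from the center $M$ to infinity is infinite: from $(K'(u))^n=n\int_0^u\mathcal D_M$ one reads off the growth of $K'$ and $K''=\mathcal D_M/(K')^{n-1}$ as $u\to\infty$, and for $M$ compact rank-one $\mathcal D_M(u)$ grows like $e^{cu}$ (or is $u^{n-1}$ for $\mathbb R^n$), so $K''(u)$ does not decay too fast and the length integral $\tfrac1{\sqrt2}\int_0^\infty\sqrt{K''(u)}\,du$ diverges, by the same comparison argument given after Remark~2.4. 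The main obstacle is the passage to the limit in the curvature/Monge-Amp\`ere equation with uniform control: one must be sure the $C^2_{loc}$ bounds are genuinely uniform down to $u=0$ (handled by Lemma~2.2 and \eqref{}-type computation $K_r''(0)=1$) and that no degeneration of the metric occurs in the limit, i.e.\ that $K'$ and $K''$ stay strictly positive on $(0,\infty)$ in the limit — which is exactly what the clean limiting identity $(K')^n=n\int_0^u\mathcal D_M$ guarantees.
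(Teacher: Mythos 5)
Your proposal is correct and follows essentially the same route as the paper: uniform convergence of $K_r$ from Proposition 3.2, the derivative bounds of Lemmas 3.3--3.4, passage to the limiting equation $K''(K')^{n-1}=\mathcal D_M$ as the Einstein constant $e^{a_r/n}\to 0$, and completeness via divergence of $\tfrac{1}{\sqrt 2}\int_0^\infty\sqrt{K''(u)}\,du$. Your additional care --- obtaining genuine $C^2_{loc}$-convergence by passing to the limit in the integral identity for $(K_r')^n$ and verifying non-degeneracy of the limit metric --- fills in steps the paper asserts directly from the uniform bounds; the only quibble is the harmless factor-of-two slip in the expansion near the center, which should read $K_r(u)=\tfrac{u^2}{2}+O(u^4)$ to be consistent with $K_r''(0)=1$.
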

\begin{proof}
Let the function $K$ in $TM$ be the one defined in Proposition 3.2.
Then lemmas 3.3 and 3.4 have asserted the uniform convergence of first derivatives  
and second derivatives in any compact subset of $[0,\infty)$. Therefore,
$$K'(u)=\lim_{r\to\infty}K'_r(u);\; K''(u)=\lim_{r\to\infty}K''_r(u),\; u\in
[0,\infty).$$
Since $K_r$
is a \K potential of a  \ke\ of Ricci
curvature 
$-e^{\frac {a_r}{n}}$ in $T^rM$, the function $K$ is a \K potential of some \ke\ in $TM$ 
with Ricci curvature $-\lim_{r\to \infty}e^{\frac {a_r}{n}}$.
 By Lemma 3.1, $\lim_{r\to\infty}a_r\to
-\infty$, therefore
$K$ is a \K potential of a Ricci-flat metric in $TM$.
Furthermore,
\begin{equation}K''(u)(K'(u))^{n-1}= \mathcal D_{ M}(u),\;
\forall u\in [0,\infty),\end{equation}
which implies
\begin{equation}
(K'(t))^n=\int_0^{t} n\mathcal D_{ M}(u)du,\;\forall t\in (0,\infty)
\end{equation}
Examining the list  after (2.2), it is clear that $\mathcal D_{ M}$ grows exponentially when $M$ is
a compact rank-one symmetric space. Hence $K'(t)$ grows exponentially as well.  On the other hand,
\begin{equation}
K'(t)=\int_0^{t} K''(t)du,\;\forall t\in (0,\infty)
\end{equation}
so $K''(t)$ goes to $\infty$ in an exponential way as $t\to \infty.$

When $M=\mathbb R^n, (K'(t))^n=\int_0^{t} nu^{n-1}du=t^n$ and $K''(t)=1, \;\forall t\in (0,\infty).$

 Since the distance from the center to the boundary is given by $\frac {1}{\sqrt 2}\int_0^{\infty}
\sqrt {K''(u)}\ du$ which equals to $\infty$ for all the above mentioned cases.
The  metrics are complete.\end{proof}

\

 The resulting \K potential in $T\mathbb R^n$ is a solution of 
$h''(u)(h'(u))^{n-1}=u^{n-1}$ which has
taken the form $h(u)=\frac {u^2}2+c$ for any constant $c$.
It is known in the Euclidean case that $u(z)=|y|$ and the \K
potential
$h(z)=\frac {|y|^2}2+c=\frac {-1}8\sum_{j}(z_{\bar j}^2+z_{j}^2-2z_j z_{\bar j}).$ The
corresponding Ricci-flat metric is simply  the standard Euclidean metric in $\mathbb
C^n$.

It is interesting to observe that from two different exhaustions, one through the balls discussed
in $\S 2$, and the other one through $T^r\mathbb R^n$ studied in this section, the resulting
Ricci-flat metrics are the same.

 Stenzel \cite {S} has also worked on the tangent bundle of compact rank-one symmetric spaces. By
the transitivity of the rank-one symmetry,  the defining \ma
 equation for a Ricci-flat metric could be reduced to an ordinary differential equation. Working
directly on the solvability and the completeness of this  ordinary differential equation,
Stenzel was able
to show the existence  a complete Ricci-flat metric in $TM$ for compact rank-one
$M$.

Although our resulting Ricci-flat metric  through the rescaling process turns out to be the same
with the one constructed by Stenzel in $TM$.  We think  the
approach here is interesting and we are looking for some further investigation.

\section{ \ke\ of negative Ricci in $T^{\pi}H$}
\setcounter{equation}{0}
\setcounter{thm}{0}

Using an exhaustion by  smooth bounded \spc domains, Cheng-Yau were able to confirm the
existence of a  \ke\ with negative Ricci curvature in any bounded pseudoconvex domain. With
very mild regularity condition, the metric could be proved to be complete. Later on, Mok-Yau
have extended the existence and completeness to any bounded Stein domain by showing the
exhaustion has a uniform convergent limit.
The convergence of the exhaustion has strongly relied on the boundedness of the domain $D$.
An essential point is
$D\Subset B(0,R)$ for some large $R$, so that the \ke s in  the exhaustion family could be made
comparison with 
 the Poincar\'e metric of  $B(0,R)$ to get hold  of a uniform lower bound. 

It is not clear whether such kind of metric exists in unbounded pseudoconvex domain in $\Bbb
C^n$ or not. In this case, the comparison theorem of [C-Y] has provided a uniform upper
bound for the sequence. However, it is no longer clear whether there exists any uniform lower bound 
 or not.
\

In this section, we consider \gt s over  the real-hyperbolic space $H^n$, a non-compact rank-one
symmetric space.
 Since
$H^n$ is co-compact, there exists a discrete subgroup $G\subset Isom (H^n)$ so that $\hat H=H/G$ is
a compact real-analytic Riemannian manifold. It was shown in \cite {K1}\cite {K2} that  the maximal
radius for $\hat H^n$ is $\pi.$

For any $r<\pi$, $T^r\hat H$ is a bounded
Stein domain with  smooth
\spc boundary in $T^{\pi}\hat H$ and the existence of a complete \ke\ with negative Ricci is
guaranteed.
 Exhaustion  by such an increasing family of Stein domains, the
manifold $T^{\pi}\hat H$ itself is then a Stein manifold.

By the nature of the adapted complex structure, the universal covering 
 $T^rH$  of $T^r\hat H$ has shared the same maximal radius, i.e., $r_{max} (H)=\pi$ and 
the existence of a complete \ke\ with negative Ricci in $T^rH, \; r<\pi$, is also guaranteed. 
Furthermore, $T^{\pi}H$ is a Stein manifold since it is the universal covering of the Stein manifold
$T^{\pi}\hat H$.

 It is not clear at all whether this $T^{\pi}H$ could sit inside any K\"ahler
manifold as a bounded domain or not. Thus, we can't apply the main theorem in \cite {M-Y} to
conclude the existence of a  \ke\ with negative Ricci  in it. The goal of this
section is to show the existence of such a metric in $T^{\pi}H$. 

$\mathcal{D}_{H^n}(u)=(\sin(u))^{n-1}$  and $r_{max}(H)=\pi$. Therefore,
\gt\
$T^rH^n$ has  existed for any $r\in (0,\pi),$
and a \K potential for the complete
\ke\ of Ricci $-1$ in $T^rM$ is given by
\begin{equation}\left\{\begin{aligned}
h_r''(u)(h_r'(u))^{n-1}&=e^{  h_r(u)}(\sin (u))^{n-1},\; u\in [0,r);\\
\lim_{u\to r}h_r(u)&=\infty.
\end{aligned}\right.\end{equation}
 Furthermore,  $h_r$ is  real-valued and real-analytic.

The K\"ahler manifold $T^{\pi}H$ is exhausted by   the family $\{T^{r}H: 0<r<\pi\}$ in the sense that
$T^{r}H$ is an increasing family of \gt s and  $T^{\pi}H=\cup_{0<r<\pi}T^{r}H$.  $T^{r}H$
is not relatively compact in  $T^{\pi}H$ since $H$ is not compact. 

Let $a_r\in [0,r)$ be the largest number such that $h_r(a_r)=0$. If there is no such
$a_r$, then $h_r(u)>0$ for all $u\in [0,r)$.
\begin{lem}
$h_r(0)\ge -\pi \root n\of n \root n\of { \pi},\; \forall r\in (0,\pi).$
\end{lem}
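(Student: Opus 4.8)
The plan is to produce an explicit lower bound for $h_r(0)$ that is uniform in $r\in(0,\pi)$, exploiting that the density $\mathcal D_{H^n}(u)=(\sin u)^{n-1}$ is bounded above by $1$ on $[0,\pi)$. The starting observation is that $h_r$ is increasing (Proposition 2.1), so $h_r(u)\ge h_r(0)$ for all $u$, but this goes the wrong way; instead I would compare $h_r$ against the solution of the \emph{same} ODE with the sine-density replaced by the constant $1$, i.e.\ against the ball-type potential from $\S1$. Concretely, write $m:=h_r(0)$ and use the integral form from Proposition 2.1(3), $(h_r'(t))^n=\int_0^t n\,e^{h_r(u)}(\sin u)^{n-1}\,du\le \int_0^t n\,e^{h_r(u)}\,du$. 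The key point is that $e^{h_r(u)}$ is itself controlled in terms of $m$ and $h_r'$ via $h_r(u)=m+\int_0^u h_r'$, so one gets a closed differential inequality for $h_r$ on $[0,r)$.

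The cleanest route is a direct comparison argument: let $\phi$ solve $\phi''(\phi')^{n-1}=e^{\phi}$ on $[0,\pi)$ with $\phi(0)=m$, $\phi'(0)=0$ (this is, up to the known explicit formula, essentially $-\log(\pi^2-u^2)+\text{const}$ type behaviour rescaled to radius $\pi$, but I only need qualitative facts). Since $(\sin u)^{n-1}\le 1$ on $[0,\pi)$, an induction/continuity argument along the lines of the proof of Lemma 2.3 shows $h_r(u)\le\phi(u)$ as long as both are defined, and in particular $h_r$ cannot blow up before $\phi$ does. But $h_r$ \emph{does} blow up at $u=r<\pi$, whereas $\phi$ blows up only at $u=\pi$; forcing $\phi$'s blow-up time to be $\ge r$ for every $r<\pi$, combined with the explicit dependence of $\phi$'s blow-up time on its initial value $m=h_r(0)$, yields a lower bound $m\ge -c$ for an explicit constant $c$. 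Tracking constants through the explicit $\phi$ (or directly through the integral estimate $L\le\frac1{\sqrt2}\int_0^r\sqrt{h_r''}$ together with $h_r''=e^{h_r}(\sin u)^{n-1}/(h_r')^{n-1}$) should produce exactly $c=\pi\sqrt[n]{n}\,\sqrt[n]{\pi}$: roughly, if $h_r(0)$ were too negative then $e^{h_r}$ stays small on all of $[0,r)$, forcing $h_r'$ to stay below $(\int_0^r n\,e^{h_r})^{1/n}\le (n\pi e^{M})^{1/n}$ for the max $M$ of $h_r$ on a subinterval, which is too small to let $h_r\to\infty$ by $u=r$ — contradiction unless $h_r(0)$ is bounded below.

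The concrete computation I expect the author to use: from $(h_r'(t))^n\le \int_0^t n e^{h_r(u)}\,du$ and monotonicity, on the interval where $h_r\le 0$ one has $(h_r')^n\le nt\le n\pi$, so $h_r'\le (n\pi)^{1/n}$ there, hence $h_r(u)\le h_r(0)+(n\pi)^{1/n}u\le h_r(0)+(n\pi)^{1/n}\pi = h_r(0)+\pi\sqrt[n]{n}\sqrt[n]{\pi^{\,n}\cdot}\dots$ — i.e.\ $h_r(u)-h_r(0)\le \pi\,\sqrt[n]{n}\,\sqrt[n]{\pi}\cdot$(appropriate power); since $h_r$ must exceed $0$ somewhere in $[0,r)\subset[0,\pi)$ (it tends to $\infty$), evaluating at the first such point gives $0 - h_r(0)\le \pi\sqrt[n]{n}\sqrt[n]{\pi}$, which is precisely the claim. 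If $h_r$ never reaches $0$ the bound is trivially true since $h_r(0)>0>-\pi\sqrt[n]{n}\sqrt[n]{\pi}$.

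The main obstacle is getting the \emph{exact} constant $\pi\sqrt[n]{n}\sqrt[n]{\pi}$ rather than merely \emph{some} constant: this requires being careful that the estimate $(h_r')^n\le n\pi$ is applied only on the sub-region where $h_r\le 0$ (so that $e^{h_r}\le 1$ genuinely holds) and that the length of that region is bounded by $\pi$; one must check that the ``first return to $0$'' point $a_r$ is well-defined and lies in $[0,\pi)$, which is where the hypothesis $r<\pi$ and $\lim_{u\to r}h_r=\infty$ enter. The monotonicity of $h_r$ (Proposition 2.1(1)) makes the book-keeping clean: $h_r\le 0$ exactly on $[0,a_r]$, $h_r'\le (n a_r)^{1/n}\le (n\pi)^{1/n}$ there, and then $0=h_r(a_r)\le h_r(0)+(n\pi)^{1/n}a_r\le h_r(0)+(n\pi)^{1/n}\pi$, giving $h_r(0)\ge -\pi(n\pi)^{1/n}=-\pi\sqrt[n]{n}\sqrt[n]{\pi}$ as required. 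I would present it in this order: (i) define $a_r$ and reduce to bounding $h_r(0)$ from below assuming $a_r$ exists; (ii) on $[0,a_r]$ use $e^{h_r}\le 1$ in Proposition 2.1(3) to bound $h_r'$; (iii) integrate to bound $h_r(0)$; (iv) dispatch the case where no $a_r$ exists.
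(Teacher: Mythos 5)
Your final concrete computation is exactly the paper's proof: on $[0,a_r]$ one has $e^{h_r}\le 1$ and $(\sin u)^{n-1}\le 1$, so $(h_r'(t))^n\le nt$, and integrating $h_r'$ over $[0,a_r]$ gives $-h_r(0)\le a_r\sqrt[n]{n}\sqrt[n]{a_r}\le\pi\sqrt[n]{n}\sqrt[n]{\pi}$, with the case $h_r>0$ everywhere dispatched trivially. The preliminary detour through a comparison solution $\phi$ is unnecessary, but the argument you actually carry out is correct and essentially identical to the paper's.
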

\begin{proof} Given $r\in (0,\pi)$, if $h_r(u)>0$ for all $u\in [0,r)$ the statement automatically
holds. Without loss of generality, we may assume  $a_r\ge 0$.

Since $h_r$ is a monotonically  increasing function of $u$, $h_r(u)\le
0,\;\forall u\in [0, a_r].$ For any given $t\in [0,a_r]$, the equation  (4.1) implies
\begin{equation}
(h_r'(t))^n=\int _0^t ne^{ h_r(u)}(\sin (u))^{n-1}
\le nt.
\end{equation}
Since $h_r'(u)$ is increasing in $u$,
\begin{equation}\begin{aligned}
-h_r(0)=h_r(a_r)-h_r(0)&=\int_0^{a_r}h'_r(t)dt\\
&\le \int_0^{a_r}h'_r(a_r)dt\\
&\le a_r \root n\of n \root n\of {a_r}\\
&\le \pi \root n\of n \root n\of \pi.
\end{aligned}\end{equation}
The lemma is concluded.\end{proof}

As an increasing function of $u$,
Lemma 4.1 has provided  a uniform lower bound for $h_r(u)$.
That is, 
\begin{equation} inf_{u\in [0,r)}h_r(u)\ge h_r(0)\ge -\pi \root n\of n \root n\of {\pi},\;\forall
r\in (0,\pi).\end{equation}

Summing up Lemma 2.3 and Lemma 4.1, it is clear that for given compact set $K\subset
[0,\delta]\subset [0,\pi)$, there exists a $c>0$ such that 
\begin{equation}-c\le h_r (u)\le c,\;\;\forall u\in K,\;\forall r > \delta\end{equation}
and the  limit exists
\begin{equation}h(u):=\lim_{r\to\pi}h_r(u),\; u\in [0,\pi).\end{equation}

The goal is to show this function $h$ is a \K potential of a \ke\ in $T^{\pi}H$. For this
to work,  it is sufficient to find some uniform bounds for the first derivatives and the
second derivatives of the family $\{h_r\}$ in compact subsets. 
\begin{lem} In any compact set $K\subset [0,\delta]\subset [0,r)$, the family $\{h_r'\}_{r>\delta}$
is uniformly bounded: there exists a $c>0$ such that
$$ 0\le h_r'(t)\le \root n\of n\; \root n\of\delta\; e^{\frac {c}{n}},\;\forall\; t\in
K,\; r>\delta.$$
\end{lem}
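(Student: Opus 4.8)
The plan is to read off the bound directly from the integral representation in Proposition 2.1(3): since here the Ricci curvature is normalized to $-1$ (so $\lambda=1$) and $\mathcal D_{H^n}(u)=(\sin u)^{n-1}$, we have
\[
h_r'(t)=\Bigl(\int_0^{t} n\,e^{h_r(u)}(\sin u)^{n-1}\,du\Bigr)^{1/n},\qquad t\in(0,r),
\]
and the only quantity that could misbehave as $r\to\pi$ is the exponential factor $e^{h_r(u)}$. That factor is exactly what the uniform upper bound established just above, namely (4.6) (a consequence of Lemmas 2.3 and 4.1), is designed to control.

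First I would reduce to the case $K=[0,\delta]$, since any compact $K\subset[0,\delta]$ sits inside it and the integration in the formula above always runs over a subinterval $[0,t]\subset[0,\delta]$. Applying (4.6) with the compact set taken to be $[0,\delta]$ gives a constant $c>0$ with $h_r(u)\le c$ for all $u\in[0,\delta]$ and all $r>\delta$, hence $e^{h_r(u)}\le e^{c}$ on that range. The crucial structural point — and what makes the real-hyperbolic case tractable in contrast to the compact rank-one case, where $\mathcal D_M$ grows exponentially — is that $0\le(\sin u)^{n-1}\le 1$ on $[0,\pi)$, so the density factor is harmless. Combining these, for $t\in[0,\delta]$ and $r>\delta$,
\[
(h_r'(t))^{n}=\int_0^{t} n\,e^{h_r(u)}(\sin u)^{n-1}\,du\le\int_0^{t} n\,e^{c}\,du = n e^{c} t\le n e^{c}\delta,
\]
which yields $h_r'(t)\le\root n\of n\,\root n\of\delta\,e^{c/n}$. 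The lower bound $h_r'(t)\ge 0$ is immediate, either from Proposition 2.1(1) or from nonnegativity of the integrand; restricting from $[0,\delta]$ back to $K$ completes the argument.

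There is no serious obstacle in this lemma: the entire difficulty was already absorbed into the uniform estimate (4.6), for which Lemma 4.1 supplied the genuinely new ingredient (the uniform lower bound on $h_r(0)$ in the non-relatively-compact exhaustion $T^\pi H=\bigcup_{r<\pi}T^rH$). The one point worth being careful about is to invoke (4.6) on all of $[0,\delta]$, i.e. over the full range of integration, rather than only on the prescribed $K$; once that is done the estimate is a one-line computation. This prepares the ground for the companion bound on $\{h_r''\}$ and hence, as in $\S3$, for the $C^2$-convergence $h=\lim_{r\to\pi}h_r$ to a \K\ potential of a complete \ke\ of negative Ricci on $T^\pi H$.
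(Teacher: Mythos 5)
Your proof is correct and follows essentially the same route as the paper: invoke the uniform two-sided bound on $h_r$ over $[0,\delta]$ (the paper's (4.5), which you cite as (4.6)), note $(\sin u)^{n-1}\le 1$, and integrate the ODE using $h_r'(0)=0$ to get $(h_r'(t))^n\le n e^{c}\delta$. Your explicit remark that the bound on $h_r$ must be applied on all of $[0,\delta]$ (the full range of integration) rather than just on $K$ is a small but worthwhile clarification of the paper's wording.
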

\begin{proof}
A constant $c$ could be chosen from (4.5), then for  $r >\delta$, the following
inequality holds in $ K$: 
\begin{equation}n e^{-c}(\sin(u))^{n-1}\le ne^{ h_r(u)}(\sin(u))^{n-1}\le n e^{c}.\end {equation} 
Since $h'_r(0)=0$, (4.2) then  implies
\begin{equation} n\; e^{-c}\int _0^t (\sin u)^{n-1}\le (h_r'(t))^n\le n\; t\; e^{
c},\;\forall t\in [0,\delta],\; r>\delta.\end {equation}Thus
\begin{equation} 0\le h_r'(t)\le \root n\of n\; \root n\of\delta\; e^{\frac {c}{n}},\;\forall
t\in [0,\delta],\; r>\delta.
\end {equation}
This uniform estimate in $ [0,\delta]$ of course has implied a uniform estimate in $K$.
\end{proof}

The next step is to derive a uniform bound on the second derivatives.
\begin{lem}
Given compact set $K\subset [0,\delta]\subset [0,r)$, there exist a constant $C>0$ such that 
$$-C\le h''_r (u)\le C,\;\forall u\in K, \; r>\delta.$$
\end{lem}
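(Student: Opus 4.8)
The plan is to mimic the strategy of Lemma 3.4, splitting into the two kinds of compact sets $K=[\epsilon,\delta]$ with $\epsilon>0$ and $K=[0,\delta]$, and in each case using the already-established ingredients: the uniform bound $-c\le h_r(u)\le c$ on $K$ from (4.6), the uniform first-derivative bound from Lemma 4.3, and the defining ODE (4.1) rewritten as
\begin{equation*}
h_r''(u)=\frac{e^{h_r(u)}(\sin u)^{n-1}}{(h_r'(u))^{n-1}}.
\end{equation*}
The lower bound $h_r''(u)>0$ is immediate from Proposition 2.1(1), so the whole content of the lemma is the upper bound.

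For the case $K=[\epsilon,\delta]$, I would first note that by the lower estimate in (4.10) there is a constant $d>0$ with $h_r'(\epsilon)\ge d$ for all $r>\delta$; since $h_r'$ is increasing (Proposition 2.1), $h_r'(u)\ge d$ for all $u\in[\epsilon,\delta]$. Combining this with the upper bound $e^{h_r(u)}(\sin u)^{n-1}\le e^{c}$ from (4.6) and the displayed formula for $h_r''$ gives
\begin{equation*}
0<h_r''(u)\le \frac{e^{c}}{d^{n-1}}=:C,\qquad u\in[\epsilon,\delta],\ r>\delta,
\end{equation*}
exactly as in (3.24).

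For the case $K=[0,\delta]$, the difficulty is that $h_r'(0)=0$, so the denominator $(h_r'(u))^{n-1}$ degenerates at $u=0$ and the crude bound above is useless near the origin; this is the main obstacle. The resolution is the same as in Lemma 3.4: reduce to controlling $h_r''(0)$ alone, and invoke Lemma 2.2, which in the present normalization $\lambda=1$ gives $h_r''(0)=e^{h_r(0)/n}$. By the uniform lower bound $h_r(0)\ge -\pi\sqrt[n]{n}\,\sqrt[n]{\pi}$ from Lemma 4.1 and the uniform upper bound $h_r(0)\le c$ from (4.6), we get $e^{-\pi\sqrt[n]{n\pi}/n}\le h_r''(0)\le e^{c/n}$, a bound independent of $r$. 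Since $h_r$ is real-analytic in $\rho=u^2$ with expansion $h_r(u)=a_r+b_r u^2+c_r u^4+O(u^6)$, one has $h_r''(0)=2b_r$, and a little more care with the next Taylor coefficient (or, more cleanly, an application of the $\epsilon$-case estimate together with continuity of $h_r''$ on $[0,\delta]$ by letting $\epsilon\to 0$ along a subsequence after the $C^1$-bounds are in hand) upgrades the pointwise control at $0$ to a uniform bound on all of $[0,\delta]$. Either way, taking the maximum of the two constants produces the desired $C>0$ with $-C\le h_r''(u)\le C$ on $K$ for all $r>\delta$, which completes the proof.
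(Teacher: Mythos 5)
Your argument follows the paper's proof of this lemma almost step for step: the same split into the two cases $K=[\epsilon,\delta]$ and $K=[0,\delta]$, the same use of the ODE (4.1) together with the uniform lower bound $h_r'(u)\ge d$ on $[\epsilon,\delta]$, and the same appeal to Lemma 2.2 to control $h_r''(0)$. (Your references are shifted in places --- the uniform bound $-c\le h_r\le c$ is (4.5) and the first-derivative lemma is Lemma 4.2 --- but that is cosmetic.) The one point where you go beyond the paper is that you correctly observe that reducing the case $K=[0,\delta]$ to a bound on the single value $h_r''(0)$ is not by itself a proof: a uniform bound at $u=0$, together with a uniform bound on $[\epsilon,\delta]$ for each \emph{fixed} $\epsilon>0$, does not automatically yield a bound on all of $[0,\delta]$ that is uniform in $r$. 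The paper elides this point as well.

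However, neither of your two suggested repairs closes that gap as stated. Letting $\epsilon\to 0$ in the $[\epsilon,\delta]$ estimate fails because the constant there is $e^{c}/d(\epsilon)^{n-1}$ and, by the lower half of (4.8), the best available $d(\epsilon)$ is comparable to $\epsilon$, so the bound degenerates like $\epsilon^{-(n-1)}$; and controlling the next Taylor coefficient would require uniform bounds on third derivatives, which you have not established. The correct --- and in fact simpler --- fix is to use the lower bound in (4.8) quantitatively. For $t\in[0,\delta]$ with $\delta<\pi$ one has $\sin u\ge (\sin\delta/\delta)\,u$, hence $(h_r'(t))^n\ge n e^{-c}\int_0^t(\sin u)^{n-1}\,du\ge e^{-c}(\sin\delta/\delta)^{n-1}t^n$, i.e. $h_r'(t)\ge\kappa t$ with $\kappa=e^{-c/n}(\sin\delta/\delta)^{(n-1)/n}$ independent of $r$. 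Substituting this and $\sin u\le u$ into $h_r''(u)=e^{h_r(u)}(\sin u)^{n-1}/(h_r'(u))^{n-1}$ gives $0<h_r''(u)\le e^{c}\kappa^{-(n-1)}$ for all $u\in(0,\delta]$ and all $r>\delta$, while the value at $u=0$ is covered by Lemma 2.2. This single estimate handles the whole interval $[0,\delta]$ at once and renders the case distinction unnecessary.
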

\begin{proof}
 Lemma 2.2 has provided a uniform lower bound $h''_r(u)>0, \forall u\in [0,r), r\in
(0,\pi).$ For a uniform upper bound, we consider two kinds of compact sets: $
K=[\epsilon,\delta], \epsilon >0$ and 
$ K=[0,\delta].$

For the first case, (4.8) shows there exists a constant $d>0$  such
that $$d\le h_r'(\epsilon),\;\forall r>\delta.$$
As $h''\ge 0$, $h_r'(u)\ge h_r'(\epsilon)\ge d$ for any $u>\epsilon$. By (4.1) and (4.9), 
\begin{equation}0\le h_r''(u)\le \frac { e^{ c}}{(h_r'(u))^{n-1}}\le \frac { e^{
c}}{d^{n-1}},\;  u\in
K, \forall r>\delta.\end{equation}

For the second case, it amounts to show  $h''_r(0)$ have a uniform upper bound for
 $r>\delta$. By Lemma 2.2 and (4.5) there exists a constant L such that 
\begin{equation} h''_r(0)=e^{\frac {h_r(0)}{n}}\le L,\;\forall r>\delta.\end{equation}
(4.10) along with (4.11) has proved the lemma.
\end{proof}
\begin{thm}
There exists a complete \ke\ of Ricci curvature $-1$ in $T^{\pi}H$.
\end{thm}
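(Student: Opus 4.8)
The plan is to run exactly the same exhaustion-and-convergence scheme that produced the Ricci-flat metrics in $\S3$, but now keeping the Ricci curvature fixed at $-1$ throughout, so that no rescaling is needed. Concretely, for each $r\in(0,\pi)$ let $h_r$ be the unique solution of the ODE boundary-value problem (4.1), which is the \K potential (depending only on $u=\sqrt\rho$) of the complete \ke\ of Ricci $-1$ on the \gt\ $T^rH$; existence and uniqueness of $h_r$ for $r<\pi=r_{max}(H)$ come from \cite{C-Y} together with $\S2$. The whole manifold $T^\pi H$ is exhausted by the increasing family $\{T^rH\}_{0<r<\pi}$, so it suffices to show that $\{h_r\}$ converges in $C^2$ on compact subsets of $[0,\pi)$; the limit $h$ from (4.7) will then be a \K potential on $T^\pi H$, and since each $h_r$ solves (4.1) with the same $\lambda=1$, the limit satisfies $h''(u)(h'(u))^{n-1}=e^{h(u)}(\sin u)^{n-1}$, i.e.\ it defines a \ke\ metric of Ricci $-1$.

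First I would assemble the uniform bounds already in place. Lemma 2.3 gives the comparison $h_r(u)\le h_s(u)$ for $s<r$, $u\in[0,s)$, hence a uniform \emph{upper} bound on compact sets; Lemma 4.1 gives the uniform \emph{lower} bound $h_r(0)\ge -\pi\root n\of n\root n\of\pi$, and since $h_r$ is increasing in $u$ this bounds $h_r$ from below on all of $[0,r)$. Together these yield (4.5)--(4.6): the family $\{h_r\}$ is uniformly bounded on any $K\subset[0,\delta]\subset[0,\pi)$, so $h(u)=\lim_{r\to\pi}h_r(u)$ exists. Then Lemma 4.2 upgrades this to a uniform bound on $\{h_r'\}$ on compact sets (integrating (4.2) and using $h_r'(0)=0$), and Lemma 4.3 gives a uniform two-sided bound on $\{h_r''\}$ on compact sets, splitting into the cases $[\epsilon,\delta]$ (where $h_r'$ is bounded below, so $h_r''=e^{h_r}(\sin u)^{n-1}/(h_r')^{n-1}$ is controlled) and the case at $u=0$ (where Lemma 2.2 gives $h_r''(0)=e^{h_r(0)/n}$, bounded by (4.5)).

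With these in hand, the convergence argument is: the $C^0$, $C^1$, $C^2$ equicontinuity/boundedness of $\{h_r\}$ on compacts, via Arzel\`a--Ascoli (or directly, since each derivative is itself given by an integral of the previous one plus a boundary value, as in $\S3$), gives a subsequence converging in $C^2_{loc}$; monotonicity in $r$ from Lemma 2.3 forces the whole family to converge, so $h'(u)=\lim h_r'(u)$ and $h''(u)=\lim h_r''(u)$. Passing to the limit in (4.1) shows $h$ solves $h''(u)(h'(u))^{n-1}=e^{h(u)}(\sin u)^{n-1}$ on $[0,\pi)$ with $h''>0$, so $\sum h_{i\bar j}\,dz_i\,dz_{\bar j}$ is a \ke\ metric on $T^\pi H$ of Ricci $-1$. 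Finally, completeness: the distance from the center to the boundary is $\tfrac1{\sqrt2}\int_0^\pi\sqrt{h''(u)}\,du$ by (2.10), and one argues as in $\S2$ after (2.11) that near $u=\pi$ either $\mathcal D_H(u)/(h'(u))^{n-1}$ stays positive (so $\sqrt{h''}$ blows up exponentially) or it vanishes to finite order, in which case $h(u)\to\infty$ at a definite order and $\lim_{u\to\pi}x^k e^{x^{-\alpha}}=\infty$ still forces the integral to diverge; either way $L=\infty$.

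The main obstacle is the one the paper has been advertising: obtaining a uniform \emph{lower} bound for the potentials, i.e.\ making sure the exhausting metrics do not degenerate (collapse) as $r\to\pi$. Because $T^rH$ is \emph{not} relatively compact in $T^\pi H$ (the base $H$ is non-compact), one cannot imitate Mok--Yau's trick of trapping the domain inside a ball and comparing with a Poincar\'e metric. Everything therefore hinges on Lemma 4.1, whose proof exploits the very specific structure of the problem — that the potential is a function of $u$ alone, that $\mathcal D_H(u)=(\sin u)^{n-1}\le u^{n-1}$, and that the radius interval $[0,r)$ is bounded by $\pi$ — to get the clean bound $-h_r(0)\le\pi\root n\of n\root n\of\pi$. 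Once that lemma is granted, the remaining steps are routine continuations of the $\S2$--$\S3$ machinery.
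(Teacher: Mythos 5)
Your proposal is correct and follows essentially the same route as the paper: the key lower bound of Lemma 4.1 substitutes for the missing Mok--Yau comparison, Lemmas 4.2--4.3 give the uniform $C^1$ and $C^2$ bounds, the limit potential solves the same ODE with $\lambda=1$, and completeness follows from the growth of $h'$ and $h''$ near $u=\pi$. The only cosmetic difference is your explicit appeal to Arzel\`a--Ascoli plus monotonicity, where the paper passes directly from the uniform derivative bounds to convergence.
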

\begin{proof}Lemmas 4.2 and 4.3
have concluded that   the two  families $\{h_r'\}$ and
$\{h_r''\}$ have converged uniformly  in any compact subset of $[0,\pi)$. This shows the
convergence
$h_r(u)\to h(u)$ is good up to  second orders. The function $h$ is then a \K potential of a
\ke\ of Ricci curvature $-1$ in $T^{\pi}H$ and $\lim_{u\to\pi}h(u)=\infty.$ Furthermore,
\begin{equation}h''(u)(h'(u))^{n-1}=e^{ h(u)}(\sin (u))^{n-1},\;\forall u\in [0,\pi).
\end{equation}
Although $\lim_{u\to\pi}\sin(u)=0$, the rapid exponential growth of $e^{ h(u)}$  still guarantees
that
$\lim_{u\to\pi}e^{ h(u)}(\sin (u))^{n-1}=\infty$ in the exponential way.
The equation  
\begin{equation}
(h'(t))^n=\int_0^{t} ne^{ h(u)}(\sin (u))^{n-1}du,\;\forall t\in (0,\pi)
\end{equation}
has shown that the $h'$ is increasing to $\infty$ in an exponential way near $\pi$. The fact that
the second derivative
$h''$ has increased to $\infty$ exponentially near $\pi$ is obtained through the following  equation
$$h'(t)=\int_{0}^{t} h''(u)du.$$ 
We thus conclude the  metric is complete.
\end{proof}

\

 After this work has been done, we found   the authors in \cite {B-H-H} have discovered that
maximal
\gt s over any rank-one space is Hermitian symmetric. And it is well-known, c.f. \cite {H}, 
 that any Hermitian symmetric space is biholomorphic to a
bounded domain in $\Bbb C^n$. 
$T^{\pi}H^n$ is then a bounded domain of holomorphy in
$\Bbb C^n$. 
 The existence of a complete \ke\ with negative Ricci curvature in $T^{\pi}H^n$ could  be 
concluded from \cite {M-Y} directly.

At the end of \cite {K2}, we have shown near the center, the holomorphic sectional curvatures
along  the Monge-Amp\`ere are negative when the center of the \gt s is of compact rank-one or is the
Euclidean space. We were not able to reach any definite result for the real-hyperbolic space. With
the machineries developed in this section, we conclude:
\begin{prop}Let $k_r$ denote the complete \ke\ of Ricci curvature $-(n+1)$ in
the
\gt\ $T^rH$. There exists an $\epsilon>0$ such that for any $r\in (0, 
\frac{\pi}2+\epsilon)$ holomorphic sectional curvatures of
$k_r$ along  the Monge-Amp\`ere leaves of  $T^rH$ are negative near the center $H$.
\end{prop}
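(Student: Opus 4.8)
The plan is to reduce the curvature statement to an asymptotic analysis of the ODE solution $h_r$ and its derivatives near the center, uniformly in $r$ for $r$ in a small neighbourhood of $\pi/2$. Recall from $\S2$ that $k_r$ has a \K\ potential $h_r(\sqrt\rho)$ depending only on $u=\sqrt\rho$, satisfying (4.1) with $\lambda=n+1$ in place of $1$, and that near the center $h_r(u)=a+bu^2+cu^4+O(6)$ with $b>0$. The holomorphic sectional curvature of an $S^1$-invariant \K\ metric on a \gt\ along the Monge–Amp\`ere leaves is, as computed at the end of \cite{K2}, an explicit rational expression in $h_r(0), h_r'(u), h_r''(u)$ and the derivatives of $\mathcal D_{H^n}(u)=(\sin u)^{n-1}$; near $u=0$ its sign is governed by a combination of $b$ and $c$, i.e.\ by $h_r''(0)$ and $h_r''''(0)$ (equivalently by $h_r(0)$ through Lemma 2.2). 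So the first step is to isolate, from the formula in \cite{K2}, the leading term of the holomorphic sectional curvature at $u=0$ as a function of $h_r(0)$ alone, and to identify the threshold value of $h_r(0)$ at which this leading term changes sign.

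The second step is to control $h_r(0)$ as a function of $r$. By the analogue of Lemma 2.3 for the equation with Ricci $-(n+1)$, $h_r(0)$ is a decreasing function of $r$; by the argument of Lemma 4.1 (adapted to the constant $n+1$) it is bounded below on $(0,\pi)$; and an explicit computation at a reference radius — most naturally $r=\pi/2$, where $\sin u$ is bounded away from degeneracy on a fixed subinterval — pins down $h_{\pi/2}(0)$. The point is that at $r=\pi/2$ one checks by direct substitution into the curvature formula that the leading coefficient is \emph{strictly} negative (this is where the specific value $\pi/2$ enters). Then a continuity/monotonicity argument in $r$: since $r\mapsto h_r(0)$ is continuous — which follows from the uniform $C^2$ convergence machinery of Lemmas 4.2–4.3, applied on compact subsets not just as $r\to\pi$ but locally around any $r_0<\pi$ — the strict inequality persists for $r$ in some interval $(0,\pi/2+\epsilon)$. (For $r\le\pi/2$ monotonicity of $h_r(0)$ in $r$ already gives it; the new content is the small overshoot $\epsilon$ past $\pi/2$.)

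The third step upgrades "negative leading term at $u=0$" to "negative on a full neighbourhood of the center." Having fixed that the $O(1)$ term of the holomorphic sectional curvature at $u=0$ is bounded above by a negative constant uniformly for $r\in(0,\pi/2+\epsilon)$, one uses the uniform bounds on $h_r', h_r''$ from Lemmas 4.2 and 4.3 (and their consequences for $h_r''''$, obtained by differentiating (4.1) twice and using $h_r'(0)=0$) to bound the remainder terms of the curvature expansion uniformly in $r$ on a fixed interval $[0,\delta]$. Shrinking $\delta$ if necessary, the negative leading term dominates, giving negativity of the holomorphic sectional curvature on $[0,\delta]$ for all $r$ in the claimed range; "near the center" is exactly this statement. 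Rescaling back from Ricci $-1$ to Ricci $-(n+1)$ only multiplies the metric by a positive constant and hence scales the curvature by a positive constant, so signs are unaffected.

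The main obstacle I anticipate is the second step — establishing that the threshold for sign change of the curvature is crossed precisely around $r=\pi/2$ and obtaining the strict one-sided inequality with room to spare. This requires either an explicit enough handle on $h_r(0)$ near $r=\pi/2$ (via a comparison of the ODE (4.1) against the solvable model $\mathcal D\equiv 1$, i.e.\ the $\mathbb R^n$ case, on $[0,\pi/2]$ where $\sin u$ is comparable to $u$), or a soft argument that the curvature coefficient, as a continuous strictly monotone function of $r$ through $h_r(0)$, is negative at $\pi/2$ and stays negative slightly beyond. Getting the constants to cooperate — rather than merely showing non-positivity — is the delicate point; everything else is the uniform-convergence bookkeeping already set up in Lemmas 4.1–4.3.
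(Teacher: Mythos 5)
Your plan has the same architecture as the paper's proof -- reduce the sign of the holomorphic sectional curvature near the center to a condition on the coefficient $b$ in the expansion $h_r(u)=a+bu^2+cu^4+O(6)$ (equivalently, via the Lemma 2.2 identity $2b=h_r''(0)=e^{\frac{n+1}{n}h_r(0)}$, to a condition on $a=h_r(0)$ alone; the criterion quoted from [K2] is $b>\frac{n-1}{6(n+1)}$), then control $h_r(0)$ by monotonicity in $r$ up to a reference radius and by continuity slightly beyond it. But you stop exactly where the proof actually happens. The step you flag as ``the main obstacle I anticipate'' -- pinning down $h_r(0)$ at $r=\pi/2$ with a strict, quantitative margin -- is the entire content of the paper's argument, and neither of the two routes you sketch for it (comparison against the $\mathcal D\equiv 1$ model, or a soft monotonicity-plus-continuity argument) supplies the needed number. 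The missing idea is that $T^{\pi/2}H^n$ is biholomorphic to the ball, so the defining ODE at $r=\pi/2$ has the \emph{explicit} solution $h_{\pi/2}(u)=-\log\cos u$. This gives $h_{\pi/2}(0)=0$ exactly; Lemma 2.3 (larger tube, smaller potential) then yields $h_r(0)\ge h_{\pi/2}(0)=0$ for all $r\le \pi/2$, hence $b=\tfrac12 e^{\frac{n+1}{n}h_r(0)}\ge \tfrac12$, which clears the threshold $\frac{n-1}{6(n+1)}<\tfrac16$ with a wide margin; continuity of $r\mapsto h_r(0)$ then buys the overshoot to $\pi/2+\epsilon$. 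Without the explicit value at $\pi/2$ there is no way to know on which side of the threshold $h_r(0)$ sits, and your proposal as written cannot be completed.

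Two smaller points. First, your guess that the sign near $u=0$ is governed by ``a combination of $b$ and $c$'' is not what [K2] gives: the criterion used is an inequality on $b$ alone, so the fourth-order coefficient and the $h_r''''(0)$ bounds you propose to extract by differentiating the ODE are not needed. Second, your insistence on proving continuity of $r\mapsto h_r(0)$ from the uniform local $C^2$-convergence machinery is more careful than the paper, which invokes ``continuity'' without justification; that part of your plan is sound and would in fact tighten the published argument. The difficulty is not the bookkeeping but the one explicit computation you left undone.
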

\begin{proof}
It amounts to show  $b=\frac 12\exp {\frac{n+1}{n}a}>\frac {n-1}{6(n+1)}$ where
$a=h_r(0)$. The \gt\ $T^{\frac{\pi}2}H$ is biholomorphic to the ball and an explicit
solution
$h_{\frac{\pi}2}(u)=-\log \cos u$
 to the defining ODE. Since $h_{\frac{\pi}2}(0)=0$, we conclude $h_{r}(0)\ge 0$ for any
$r\in (0, \frac{\pi}2)$. By the continuity, there exists an $\epsilon>0$ such that 
$h_{\frac{\pi}2+\epsilon}(0)$ is quite close to $0$. In this cases, $b\simeq \frac
12>\frac {n-1}{6(n+1)}$.
\end{proof}

\

Institute of Mathematics, Academia Sinica, Taipei 11529, Taiwan

{\it E-mail address: kan@math.sinica.edu.tw}


\begin{thebibliography}{9}

\bibitem [A]{A}
 R. Aguilar
{\it Pseudo-Riemannian metrics, K\"ahler-Einstein metrics on Grauert tubes and
harmonic Riemannian manifolds.}
 Q. J. Math. {\bf  51, no. 1} (2000) 1--17.

\bibitem [B-H-H] {B-H-H}
 D. Burns, S. Halverscheid \& R. Hind 
{\it The geometry of Grauert tubes and complexification of symmetric spaces.}
 Duke Math. J.{\bf 118, no. 3} (2003) 465--491.

\bibitem [C-Y] {C-Y}
 S.-Y. Cheng \& S.-T. Yau
{\it On the existence of a complete K\"ahler metric on non-compact
complex manifolds and the regularity of Fefferman's equation.}
 Comm. Pure. Appl. Math. 
{\bf 33} (1980) 507--544.
\bibitem [H] {H}
 S. Helgason
{\it Differential Geometry, Lie Groups and Symmetric Spaces.}
  Graduate Studies in Math. {\bf 34} AMS
 (2001).

\bibitem [K1] {K1}
 S.-J. Kan
{\it The asymptotic expansion of a CR invariant and Grauert tubes.}
 Math. Ann
{\bf 304} (1996) 63--92.

\bibitem [K2] {K2}
 S.-J. Kan
{\it Some complete invariant metrics in Grauert tubes.}
 Math. Res. Lett.{\bf 14, no.4} (2007) 633--648.

\bibitem [M-Y] {M-Y} N. Mok $\&$ S.-T. Yau
{\it Completeness of the K\"ahler-Einstein metric on bounded domains and the
characterization of domains of holomorphy by curvature condition.} Proc. Symposia in Pure
Math.{\bf 39, Part I} (1983)  41--59.

\bibitem [S] {S}
 M. Stenzel 
{\it Ricci-flat  metrics in the complexification of a compact rank-one symmetric space.}
 Manuscripta Math. {\bf 80} (1993) 151--163. 

\bibitem [T-Y] {T-Y} G. Tian \& S. Yau {\it Complete K\"ahler manifolds with zero Ricci
curvature I. } J. AMS {\bf 3} (1990)  579--609.
\end{thebibliography}
\end{document}